\pgfplotsset{compat=1.6}
\pgfplotsset{every axis title/.append style={at={(0.6,1.1)}}}
\numberwithin{equation}{section}
\newtheorem{theorem}{Theorem}[section]
\newtheorem{proposition}[theorem]{Proposition}
\newtheorem{lemma}[theorem]{Lemma}
\newtheorem{definition}[theorem]{Definition}
\newtheorem{assumption}[theorem]{Assumption}
\theoremstyle{definition}
\newtheorem{auxremark}[theorem]{Remark}
\newdimen\AAdi%
\newbox\AAbo%
\def\AAk#1#2{\setbox\AAbo=\hbox{#2}\AAdi=\wd\AAbo\kern#1\AAdi{}}%
\def\eqlabel#1{\def\@currentlabel{#1}}
\def\formula#1{\def\@tempa{#1}\let\@tempb\theequation\def\theequation{%
\hbox{#1}}\def\@currentlabel{(\theequation)}$$}
\def\endformula{\leqno\hbox{(\@tempa)}$$\@ignoretrue\let\theequation\@tempb}
\def\given{\hskip5\p@\relax\vrule\@width.4\p@\hskip5\p@\relax}
\newcommand{\open}[1]{%
\par\normalfont\topsep6\p@\@plus6\p@\trivlist\item[\hskip\labelsep\itshape#1%
\@addpunct{.}]\ignorespaces}
\DeclareRobustCommand{\close}[1]{%
  \ifmmode 
  \else \leavevmode\unskip\penalty9999 \hbox{}\nobreak\hfill
  \fi
  \quad\hbox{$#1$}}
\newlength{\toskip}\settowidth{\toskip}{(\theequation)}
\begin{document}
\title[Minimal Positive Solution]{Transcendental solution to linear coefficient non-homogeneous second order recurrence relation with constant non-homogenity.}

\author[J. W. Fischer]{\textbf{\quad {Jens Walter} Fischer$^{\spadesuit\clubsuit}$ \, \, }}
\address{{\bf {Jens Walter} FISCHER},\\ Institut de Math\'ematiques de Toulouse. CNRS UMR 5219. \\
Universit\'e Paul Sabatier
\\ 118 route
de Narbonne, F-31062 Toulouse cedex 09.} \email{jens.fischer@math.univ-toulouse.fr}

\begin{abstract}
	Second order recurrence relations of real numbers arise form various applications in discrete time dynamical systems as well as in the context on Markov chains. Solutions to the recurrence relations are fully defined by the first two initial values as well as the recurrence formula. We calculate in this work explicitly as a function of $a_1$ the minimal positive solution $(a_i)_{i\in\mathbb{N}}$ to non-homogeneous second order recurrence relation with affine coefficients when the non-homogeneity is constant and negative, and the first initial value equals $a_0=0$. We show that rational coefficients lead to a sequence of transcendental numbers.\\
	Additionally, we prove that this sequence is the only bounded solution when varying $a_1$, converges to $0$ and obtain the convergence speed in $O(i^{-1})$. We comment in the last section further on the choice of rational parameters in the recurrence relation and we make a link to the impossibility of obtaining computer based visualizations of the minimal positive solution $(a_i)_{i\in\mathbb{N}}$. 
\end{abstract}
\bigskip

\maketitle

\textsc{$^{\spadesuit}$  Universit\'e de Toulouse}
\smallskip

\textsc{$^{\clubsuit}$ University of Potsdam}
\smallskip

\textit{ Key words : Recurrence relation order 2, Minimal positive solution, Convergence rates, Transcendental numbers, Simulation}  
\bigskip

\textit{ MSC 2010 : 11B37, 05A15, 40A05, 00A72} 
\bigskip

\section{Introduction}\label{sec:second_order_rec_relations}
	Non-homogeneous second order recurrence relations with constant non-homogenity of sequences of real numbers $(a_i)_{i\in\mathbb{N}}$ have the general form
	\begin{equation}\label{eq:second_order_rec_relations}
		\begin{cases}
			(a_0,\,a_1) = (q,a)\\
			\tilde{\lambda}_i a_{i+1}= \alpha + \tilde{\omega}_i a_i - \tilde{\mu}_i a_{i-1},& i\geq 1
		\end{cases}
	\end{equation}
	where $q,a,\alpha\in\mathbb{R}$ and $(\tilde{\lambda}_i)_{i\in\mathbb{N}}, (\tilde{\mu}_i)_{i\in\mathbb{N}}, (\tilde{\omega}_i)_{i\in\mathbb{N}}$ sequences of real numbers. Minimal positive solutions to second order recurrence relations come into play, when considering a family of sequences $\mathcal{A}=\left\{\left(a_i^{(a)}\right)_{i\in\mathbb{N}}|a\in\mathbb{R}^+\right\}$ defined by the same recurrence relation \eqref{eq:second_order_rec_relations} but with only the first initial value $a_0^{(a)}\geq 0$ being fixed and the second one $a_1^{(a)}=a$ being variable but positive. A minimal positive solution $(\hat{a}_i)_{i\in\mathbb{N}}$ of  the recurrence relation \eqref{eq:second_order_rec_relations} then satisfies $\hat{a}_i>0$ for all $i\geq 1$ and for any other sequence $(a_i)_{i\in\mathbb{N}}\in \mathcal{A}$ with $a_i>0$ for all $i\geq 1$ also $a_i\geq \hat{a}_i$ is satisfied for all $i\geq 1$. In particular, we find a link to transcendental numbers.
	\begin{definition}[Algebraic and Transcendent Numbers, \cite{Ono1990}]
		A number $z \in\mathbb{C}$ is called algebraic, if it is a non-zero root of some rational polynomial $p(z)=\sum_{n=0}^N q_n z^n$, i.e., $q_n\in\mathbb{Q}$ for all $n=0,\hdots,N$. Otherwise, the number $z$ is called a transcendental number.
	\end{definition}
	In what follows, we specify the recurrence relation of interest for us in this work. We investigate the behavior of the sequence $(a_i)_{i\in\mathbb{N}}$ defined by the recurrence relation \eqref{eq:second_order_rec_relations} assuming $q=0$, $a>0$, $\alpha \leq 0$ and write $\alpha=-\xi$ for some $\xi\geq 0$ and $\tilde{\omega}_i>\tilde{\lambda}_i + \tilde{\mu}_i$. Due to this assumption we can rewrite $ \tilde{\omega}_i=\tilde{\gamma}_i+\tilde{\lambda}_i + \tilde{\mu}_i$ for some positive $\tilde{\gamma}_i$ and for all $i\in\mathbb{N}$. From this we obtain the following form of the recurrence relation \eqref{eq:second_order_rec_relations}
	\begin{equation}\label{eq:main_recurrence_relation}	
		\begin{cases}
			(a_0,\,a_1) = (0,a)\\
			\tilde{\lambda}_i a_{i+1}= - \xi + ( \tilde{\lambda}_i + \tilde{\mu}_i + \tilde{\gamma}_i) a_i - \tilde{\mu}_i a_{i-1},& i\geq 1.
		\end{cases}
	\end{equation}
	For all quantitative results in this work, in particular, the rates of convergences for the the sequences resulting from \eqref{eq:main_recurrence_relation} we assume the following form of the concerned sequences $(\tilde{\lambda}_i)_{i\in\mathbb{N}}, (\tilde{\mu}_i)_{i\in\mathbb{N}}, (\tilde{\omega}_i)_{i\in\mathbb{N}}$.
	\begin{assumption}\label{amp:affine_rates}
		Let 
		\begin{enumerate}
			\item $\mathfrak{n}\in\mathbb{N}^{\ast}$, $\gamma,\lambda,\mu> 0$, $\xi\geq 0$,
			\item $\tilde{\gamma}_i=\gamma\,(i+\mathfrak{n}),\tilde{\lambda}_i= \lambda\,(i+\mathfrak{n})$ and $\tilde{\mu}_i=\mu\,(i+\mathfrak{n})$ for $i\in\mathbb{N}$.
		\end{enumerate}
	\end{assumption}
	We discussed generalizations under reduced assumptions on the coefficients in the Outlook.
	\section{Results on the minimal positive solution}
	We make a distinction between two cases, which behave qualitatively completely differently, the case  $\xi=0$ and the case $\xi>0$. The following Theorem considers the case $\xi=0$ and describes fully the sequence defined in \eqref{eq:main_recurrence_relation}. The case $\xi=0$ corresponds effectively the homogeneous case and is, therefore, easily treatable and can be covered with straight forward arguments. We need them as preliminary results in the following section. The case $\xi>0$, which we consider in this work, is more intricate but we obtain, nonetheless, the explicit form of all solutions. In particular, we obtain the following result on transcendental solutions which correspond to minimal positive solutions of Equation \eqref{eq:main_recurrence_relation}. 
	\begin{theorem}\label{thm:exclusion_rational_of_both}
		Denote by $\hat{a}\in\mathbb{R}^+$ the initial value $a_1$, such that the resulting solution is the positive minimal solution to \eqref{eq:main_recurrence_relation}. 
		Suppose $\gamma,\lambda,\mu>0$ and let $\underset{\bar{}}{a}<\bar{a}$ be the distinct real zeros of the polynomial 
		\begin{equation*}
			X\mapsto\mu X^2-(\lambda+\mu+\gamma)X+\lambda.
		\end{equation*} 
		Under assumption \ref{amp:affine_rates}, either $\gamma,\lambda,\mu\in\mathbb{Q}$ or $\hat{a}\in\mathbb{Q}$ but not both.
	\end{theorem}
	Theorem \ref{thm:exclusion_rational_of_both} is based on our complete analysis of the solutions to \eqref{eq:main_recurrence_relation}, in which we find the necessary and sufficient condition for the minimal positive solution as well as a complete description of the limit behavior of the minimal positive solution as $i\to\infty$ as well as for all other solutions. 
	\begin{theorem}\label{thm:explicit_form_any_solution}
		Denote by $\underset{\bar{}}{a},\bar{a}\in(0,1)$ with $\underset{\bar{}}{a}<\bar{a}$ the real zeros of the polynomial 
		\begin{equation*}
			X \mapsto \mu X^2-(\lambda+\mu+\gamma)X+\lambda.
		\end{equation*}  
		Under Assumption \ref{amp:affine_rates} any solution $(a_i)_{i\in\mathbb{N}}$ to equation \eqref{eq:main_recurrence_relation} with initial value  $(0,x)$ where $x>0$ has the form
		\begin{equation}
		a_i = \lambda\, x\, c \left(\dfrac{1}{\underset{\bar{}}{a}^i}-\dfrac{1}{\bar{a}^i}\right)-\sum_{k=1}^{i-1}\dfrac{ c\xi }{k+ \mathfrak{n} } \left(-\dfrac{1}{\bar{a}^{i-k}}+ \dfrac{1}{\underset{\bar{}}{a}^{i-k}}\right),
		\end{equation}
		with $ c  = (\sqrt{(\lambda+\mu+\gamma)^2-4\lambda\mu})^{-1}$.
	\end{theorem}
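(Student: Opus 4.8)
The plan is to verify directly that the stated closed form solves the recurrence with the prescribed initial data, and then to invoke uniqueness. First I would insert Assumption~\ref{amp:affine_rates} into \eqref{eq:main_recurrence_relation} and divide through by the common factor $(i+\mathfrak{n})$, reducing the recurrence to the constant-coefficient inhomogeneous form
$$\lambda\, a_{i+1} - (\lambda+\mu+\gamma)\,a_i + \mu\, a_{i-1} = -\frac{\xi}{i+\mathfrak{n}}, \qquad i\geq 1,$$
whose associated homogeneous characteristic equation is $\lambda r^2 - (\lambda+\mu+\gamma)r + \mu = 0$. A one-line computation shows this is the reciprocal (coefficient-reversed) polynomial of $\mu X^2 - (\lambda+\mu+\gamma)X + \lambda$, so its two roots are exactly $r_1 = 1/\underline{a}$ and $r_2 = 1/\bar{a}$. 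Setting $g_m := r_1^m - r_2^m = \underline{a}^{-m} - \bar{a}^{-m}$, the asserted formula reads $a_i = \lambda x c\, g_i - c\xi\sum_{k=1}^{i-1} g_{i-k}/(k+\mathfrak{n})$.

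Next I would pin down the normalization constant, which is the hinge of the entire argument. By Vieta's relations for $\mu X^2 - (\lambda+\mu+\gamma)X + \lambda$ one has $\underline{a}\bar{a} = \lambda/\mu$ and $\bar{a}-\underline{a} = \mu^{-1}\sqrt{(\lambda+\mu+\gamma)^2-4\lambda\mu}$, whence
$$\lambda c\,(r_1 - r_2) = \lambda c\,\frac{\bar{a}-\underline{a}}{\underline{a}\bar{a}} = \lambda c\,\frac{\sqrt{(\lambda+\mu+\gamma)^2-4\lambda\mu}}{\lambda} = 1,$$
using $c = (\sqrt{(\lambda+\mu+\gamma)^2-4\lambda\mu})^{-1}$. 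This single identity delivers both initial conditions simultaneously: since $g_0 = 0$ and the sum is empty at $i=0$, we get $a_0 = \lambda x c\, g_0 = 0$, while at $i=1$ the sum is again empty and $a_1 = \lambda x c\, g_1 = \lambda x c\,(r_1-r_2) = x$.

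It then remains to verify the recurrence for $i\geq 1$. The homogeneous piece $\lambda x c\, g_i$ is annihilated by the difference operator $L[u]_i := \lambda u_{i+1} - (\lambda+\mu+\gamma)u_i + \mu u_{i-1}$ by construction, so I only need to apply $L$ to the discrete-convolution term $p_i := -c\xi\sum_{k=1}^{i-1} g_{i-k}/(k+\mathfrak{n})$. Writing out the three shifted sums (with upper limits $i$, $i-1$, $i-2$ respectively) and regrouping by the index $k$, the coefficient of $1/(k+\mathfrak{n})$ is $\lambda g_{i+1-k} - (\lambda+\mu+\gamma)g_{i-k} + \mu g_{i-1-k}$, which vanishes for every $1\leq k\leq i-1$ because $g$ satisfies the homogeneous recurrence; the boundary index $k=i-1$ works out precisely because $g_0 = 0$. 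The only surviving contribution is the isolated top term $k=i$ of the first sum, giving $L[p]_i = -c\xi\,\lambda g_1/(i+\mathfrak{n}) = -\xi/(i+\mathfrak{n})$ by the normalization just established, which is exactly the required inhomogeneity.

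The main obstacle is the index bookkeeping in this last step: one must carefully track that the three convolution sums have different upper limits and confirm that the telescoping of the homogeneous recurrence, together with the two boundary contributions (the $k=i-1$ term collapsing via $g_0 = 0$ and the isolated $k=i$ term), reproduces the inhomogeneity with the correct sign and normalization. Once $L[a]_i = -\xi/(i+\mathfrak{n})$ and $(a_0,a_1)=(0,x)$ are both verified, the standard fact that a second-order linear recurrence with invertible leading coefficient (here $\tilde\lambda_i = \lambda(i+\mathfrak{n}) \neq 0$) and prescribed first two values has a unique solution identifies the closed form as that solution and completes the proof.
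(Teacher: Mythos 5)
Your proposal is correct, but it takes a genuinely different route from the paper. The paper proves this theorem via generating functions: it first derives the closed form $\mathcal{E}(z) = \bigl(z\lambda x - z\xi\sum_{k\geq 1}\frac{z^k}{k+\mathfrak{n}}\bigr)/(\lambda+\mu z^2 - qz)$ (Lemma \ref{thm:explicit_generating_function_form}), then establishes a positive radius of convergence by comparing growth rates of solutions (Lemma \ref{minimal_radius_of_convergence_E}, which itself leans on Theorem \ref{thm:convergence_speed_xi_zero} and Lemma \ref{prop:behavior_above_below_threshhold}), and finally extracts $a_i = i!^{-1}\partial_z^i \mathcal{E}(z)\big\vert_{z=0}$ via the Leibniz rule and the partial fraction decomposition $g(z)^{-1} = \frac{c}{z-\bar{a}} - \frac{c}{z-\underset{\bar{}}{a}}$. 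You instead verify the closed form directly: reduce to constant coefficients by dividing out $(i+\mathfrak{n})$, observe that the characteristic polynomial is the coefficient-reversal of $\mu X^2-(\lambda+\mu+\gamma)X+\lambda$ so its roots are $\underset{\bar{}}{a}^{-1},\bar{a}^{-1}$, check the normalization $\lambda c\,g_1 = 1$ via Vieta, apply the difference operator to the convolution term with careful boundary bookkeeping (the $k=i-1$ term collapsing via $g_0=0$, the $k=i$ term producing exactly $-\xi/(i+\mathfrak{n})$), and conclude by uniqueness of solutions with prescribed $(a_0,a_1)$ since $\tilde{\lambda}_i\neq 0$ --- all of which I have checked and is sound, including the edge case $i=1$. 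What your approach buys: it is entirely algebraic and finitary, needing no convergence analysis and none of the supporting lemmas, so it is self-contained and arguably cleaner as a proof of this one theorem. What the paper's approach buys: it is constructive (the generating function explains where the formula comes from rather than verifying it ex post), and the explicit form of $\mathcal{E}$ is reused later --- Lemma \ref{lem:small_initial_value} proves the threshold characterization of $\hat{a}$ by a sign analysis of $\mathcal{E}(z)$ near $z=\underset{\bar{}}{a}$ --- so the paper cannot dispense with the generating-function machinery even if this particular theorem could. One cosmetic note: your derivation does not actually use $x>0$, so your argument shows the formula holds for every real initial value $x$, slightly more than the theorem states.
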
 
	Since a sequence $(a_i)_{i\in\mathbb{N}}$ defined by a recurrence relation of order $2$ is fully defined by its initial values $a_0$ and $a_1$ and having fixed $a_0=0$, the minimality of the solution to \eqref{eq:main_recurrence_relation} is solely dependents on $a_1$. The following Theorem \ref{thm:minimal_solution_result} gives explicitly the form of said initial value and shows the behavior of $a_i$ as $i\to\infty$.
	\begin{theorem}\label{thm:minimal_solution_result}
		Under Assumption \ref{amp:affine_rates} and defining
		\begin{equation}\label{eq:x_hat_minimal_solution}
			\hat{a}	:= \dfrac{\xi}{\lambda}\left(\sum_{k=0}^{\infty}\dfrac{\underset{\bar{}}{a}^k}{\mathfrak{n}+k} -  \mathfrak{n} ^{-1}\right)
		\end{equation}	 	
		the sequence $(a_i)_{i\in\mathbb{N}}$ satisfying \eqref{eq:main_recurrence_relation} with initial value $(a_0, a_1) = (0,\hat{a})$ is the minimal positive solution to \eqref{eq:main_recurrence_relation} and $a_i = \mathcal{O}(i^{-1})$ as $i\to\infty$.
	\end{theorem}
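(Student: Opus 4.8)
The plan is to start from the explicit formula for the general solution given in Theorem~\ref{thm:explicit_form_any_solution} and identify the unique initial value $a_1=x$ that yields the minimal positive solution. Writing the solution with initial value $(0,x)$ as
\begin{equation*}
a_i = \lambda\, x\, c\left(\frac{1}{\underset{\bar{}}{a}^{\,i}}-\frac{1}{\bar a^{\,i}}\right)-\sum_{k=1}^{i-1}\frac{c\xi}{k+\mathfrak{n}}\left(\frac{1}{\underset{\bar{}}{a}^{\,i-k}}-\frac{1}{\bar a^{\,i-k}}\right),
\end{equation*}
I observe that both $1/\underset{\bar{}}{a}^{\,i}$ and $1/\bar a^{\,i}$ grow geometrically since $\underset{\bar{}}{a},\bar a\in(0,1)$, and since $\underset{\bar{}}{a}<\bar a$ the term $1/\underset{\bar{}}{a}^{\,i}$ dominates. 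The key idea is that the coefficient multiplying the fastest-growing mode $1/\underset{\bar{}}{a}^{\,i}$ must vanish, otherwise $a_i$ blows up and cannot be minimal. First I would collect all contributions to the $1/\underset{\bar{}}{a}^{\,i}$ mode: from the first term this is $\lambda x c\,/\underset{\bar{}}{a}^{\,i}$, and from the sum the $\underset{\bar{}}{a}$-part contributes $-\sum_{k=1}^{i-1}\frac{c\xi}{k+\mathfrak{n}}\underset{\bar{}}{a}^{\,k}/\underset{\bar{}}{a}^{\,i}$. Factoring out $1/\underset{\bar{}}{a}^{\,i}$, the net coefficient of this dominant mode is $c\bigl(\lambda x-\xi\sum_{k=1}^{i-1}\frac{\underset{\bar{}}{a}^{\,k}}{k+\mathfrak{n}}\bigr)$; requiring its limit to vanish forces $\lambda x=\xi\sum_{k=1}^{\infty}\frac{\underset{\bar{}}{a}^{\,k}}{k+\mathfrak{n}}$, which I would show is exactly $x=\hat a$ as defined in \eqref{eq:x_hat_minimal_solution} after re-indexing the sum (shifting $k\mapsto k$ versus starting from $k=0$ accounts for the $-\mathfrak{n}^{-1}$ correction term).

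Once $x=\hat a$ is fixed, the plan is to substitute back and simplify. With the dominant mode cancelled, the surviving expression for $a_i$ should regroup into a tail of the convergent series together with the $1/\bar a^{\,i}$ contributions. Concretely, using $\lambda\hat a=\xi\sum_{k=1}^{\infty}\frac{\underset{\bar{}}{a}^{\,k}}{k+\mathfrak{n}}$, the first term $\lambda\hat a\,c/\underset{\bar{}}{a}^{\,i}$ combines with the $\underset{\bar{}}{a}$-part of the finite sum to leave a \emph{tail} $c\xi\sum_{k=i}^{\infty}\frac{\underset{\bar{}}{a}^{\,k}}{k+\mathfrak{n}}\cdot\underset{\bar{}}{a}^{-i}=c\xi\sum_{j=0}^{\infty}\frac{\underset{\bar{}}{a}^{\,j}}{i+j+\mathfrak{n}}$, while the $\bar a$-parts assemble into $c\xi\sum_{k=1}^{i-1}\frac{\bar a^{\,-(i-k)}}{k+\mathfrak{n}}-\lambda\hat a c/\bar a^{\,i}$. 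I would then estimate each piece separately as $i\to\infty$.

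For the asymptotics I would bound the tail sum $\sum_{j=0}^{\infty}\frac{\underset{\bar{}}{a}^{\,j}}{i+j+\mathfrak{n}}$: since $i+j+\mathfrak{n}\ge i$, it is at most $\frac{1}{i}\sum_{j\ge0}\underset{\bar{}}{a}^{\,j}=\frac{1}{i(1-\underset{\bar{}}{a})}$, and a matching lower bound of order $i^{-1}$ follows from the $j=0$ term alone, giving this piece a clean $\Theta(i^{-1})$ behavior. The $\bar a$-contributions must be shown to be negligible of the same or smaller order; because $\bar a>\underset{\bar{}}{a}$, the factor $\bar a^{\,-(i-k)}$ grows more slowly in the relevant range and the associated sum is $O(i^{-1})$ as well, so it does not disturb the leading $i^{-1}$ scale. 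Combining the upper and lower bounds yields $a_i=\mathcal{O}(i^{-1})$ (indeed $\Theta(i^{-1})$). Finally, positivity and minimality: positivity follows because the tail representation $a_i=c\xi\sum_{j\ge0}\frac{\underset{\bar{}}{a}^{\,j}}{i+j+\mathfrak{n}}+(\text{lower order})$ is manifestly positive once the estimates confirm the tail dominates; minimality follows from the structure of the general solution, since any other admissible initial value $x>\hat a$ leaves a strictly positive coefficient on the dominant $1/\underset{\bar{}}{a}^{\,i}$ mode, forcing $a_i^{(x)}\ge a_i^{(\hat a)}$ termwise for large $i$, and one checks the comparison propagates to all $i\ge1$ via the linearity of the recurrence in the initial data.

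The main obstacle I anticipate is the careful bookkeeping in the cancellation and re-indexing step, where the finite sum $\sum_{k=1}^{i-1}$ must be split against the infinite series defining $\hat a$ so that the divergent $\underset{\bar{}}{a}^{\,-i}$ factors cancel exactly and the remainder reorganizes into the convergent tail. Getting the index shifts right—so that the constant $\mathfrak{n}^{-1}$ correction in \eqref{eq:x_hat_minimal_solution} is accounted for and no stray boundary terms survive—is delicate, and it is also the crux of proving minimality rather than merely positivity, since the whole argument hinges on the vanishing of precisely the dominant geometric mode.
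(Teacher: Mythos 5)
Your overall route is the same as the paper's: start from the explicit formula of Theorem \ref{thm:explicit_form_any_solution}, observe that $\lambda\hat a=\xi\sum_{k\ge1}\underset{\bar{}}{a}^{k}/(k+\mathfrak{n})$ (your re-indexing remark is correct), reorganize the $\underset{\bar{}}{a}$-mode into the tail $c\xi\,\underset{\bar{}}{a}^{-i}\sum_{k\ge i}\underset{\bar{}}{a}^{k}/(k+\mathfrak{n})=c\xi\sum_{j\ge0}\underset{\bar{}}{a}^{j}/(i+j+\mathfrak{n})$, and sandwich it between $c\xi/(i+\mathfrak{n})$ and $c\xi\,i^{-1}(1-\underset{\bar{}}{a})^{-1}$; your geometric-series bound is even cleaner than the paper's integral comparison, and your mode-cancellation characterization of $\hat a$ is a legitimate, more direct substitute for the paper's generating-function argument (Lemma \ref{lem:small_initial_value}). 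However, there is a genuine problem in your treatment of the $\bar a$-contributions. You assert $\underset{\bar{}}{a},\bar a\in(0,1)$ (copying the statement of Theorem \ref{thm:explicit_form_any_solution}, which is inaccurate on this point) and simultaneously claim the $\bar a$-part is $\mathcal{O}(i^{-1})$; these are incompatible. The assembled $\bar a$-part equals $c\xi\,\bar a^{-i}\bigl(\sum_{k=1}^{i-1}\frac{\bar a^{k}}{k+\mathfrak{n}}-\sum_{k=1}^{\infty}\frac{\underset{\bar{}}{a}^{k}}{k+\mathfrak{n}}\bigr)$, and if $\bar a<1$ the bracket converges to a strictly positive constant (since $\bar a>\underset{\bar{}}{a}$), so this term would diverge like $\bar a^{-i}$ and the theorem would fail. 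The argument is saved by the fact that actually $\bar a>1$: the polynomial $\mu X^2-(\lambda+\mu+\gamma)X+\lambda$ is positive at $X=0$ and equals $-\gamma<0$ at $X=1$, so exactly one root lies in $(0,1)$ and the other exceeds $1$. The paper's own proof uses precisely this (``Using $\bar a>1$'' together with Stolz--C\'esaro) to show $\bar a^{-i}\sum_{k=1}^{i-1}\frac{\bar a^{k}}{k+\mathfrak{n}}\to0$; with $\bar a>1$ your $\mathcal{O}(i^{-1})$ claim is true, but your stated justification (``grows more slowly in the relevant range'') rests on a false premise and must be replaced by this computation.

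The second gap is positivity. Dominance of the $\Theta(i^{-1})$ tail only yields $a_i>0$ for large $i$, while minimal positivity requires $a_i>0$ for \emph{all} $i\ge1$, and the $\bar a$-correction is genuinely negative for small $i$. The paper closes this via the dichotomy of Lemma \ref{prop:behavior_above_below_threshhold}: a solution that ever becomes negative decreases to $-\infty$, contradicting $a_i\to0$. You would need either that lemma (your blow-up analysis concerns the initial value $x<\hat a$, not a sign change at a later index of the $\hat a$-solution) or a direct estimate, e.g.\ writing $a_i=c\xi\bigl(\underset{\bar{}}{a}^{-i}X_i+\bar a^{-i}Y_i\bigr)$ with $X_i=\sum_{k\ge i}\frac{\underset{\bar{}}{a}^{k}}{k+\mathfrak{n}}>0$ and $Y_i=\sum_{k=1}^{i-1}\frac{\bar a^{k}}{k+\mathfrak{n}}-\sum_{k\ge1}\frac{\underset{\bar{}}{a}^{k}}{k+\mathfrak{n}}$, and noting $a_i>c\xi\,\bar a^{-i}(X_i+Y_i)=c\xi\,\bar a^{-i}\sum_{k=1}^{i-1}\frac{\bar a^{k}-\underset{\bar{}}{a}^{k}}{k+\mathfrak{n}}\ge0$ because $\underset{\bar{}}{a}^{-i}>\bar a^{-i}$ and $X_i>0$. (Your minimality step, by contrast, is fine and can be made fully explicit: by linearity $a_i^{(x)}-a_i^{(\hat a)}=\lambda(x-\hat a)c\bigl(\underset{\bar{}}{a}^{-i}-\bar a^{-i}\bigr)>0$ for all $i\ge1$ when $x>\hat a$, with no need for a separate large-$i$ argument.)
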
 
	Indeed, the proof of this result leads to the conclusion that the only initial value $a_1$ for which the solution to \eqref{eq:main_recurrence_relation} is bounded. The value $\hat{a}$ separates the real line into initial values $a_1'<\hat{a}$ for which $a_i'\to-\infty$ as $i\to\infty$ and values $a_1'>\hat{a}$ for which $a_i'\to\infty$ as $i\to\infty$.\par
	We discuss, in the following section, the proofs of the Theorems \ref{prop:monotony_thm_by_recurrence_xi_zero} to \ref{thm:minimal_solution_result} as well as additional properties and supplementary results.
\section{Proof of Theorems}
	In this section we prove the main theorems presented in Section \ref{sec:second_order_rec_relations} and present as well as prove a couple of preliminary results which are, in particular, the preparation for the proofs of Theorems \ref{thm:explicit_form_any_solution} and \ref{thm:minimal_solution_result}.
	\begin{proposition}\label{prop:monotony_thm_by_recurrence_xi_zero}
		Under Assumption \ref{amp:affine_rates} with $\xi=0$ consider for $a>0$ the sequence $(a_i)_{i\in\mathbb{N}}$ defined by the recurrence relation \eqref{eq:main_recurrence_relation} with $\xi=0$.
		\begin{equation}\label{eq:recurrence_xi_zero}	
			\begin{cases}
				(a_0,\,a_1) = (0,a)\\
				\tilde{\lambda}_i a_{i+1} = ( \tilde{\lambda}_i + \tilde{\mu}_i + \tilde{\gamma}_i)a_i - \tilde{\mu}_i a_{i-1},& i\geq 1.
			\end{cases}
		\end{equation}
		Then $(a_i)_{i\in\mathbb{N}}$ is non-decreasing and $\underset{i\to\infty}{\lim} a_i=\infty$.
	\end{proposition}
	Indeed, we can quantify the speed of divergence for the sequence $(a_i)_{i\in\mathbb{N}}$ defined in Proposition \ref{prop:monotony_thm_by_recurrence_xi_zero} by the following Proposition \ref{prop:convergence_speed_xi_zero}.
	\begin{proposition}\label{prop:convergence_speed_xi_zero}
		Under Assumption \ref{amp:affine_rates} with $\xi=0$ consider for $a>0$ the solution $(a_i)_{i\in\mathbb{N}}$ to the recurrence relation \eqref{eq:recurrence_xi_zero} with initial condition $(0,a)$. Then the sequence $(a_i a_{i+1}^{-1})_{i\in\mathbb{N}}$ converges to the smaller real zero $\underset{\bar{}}{a}\in(0,1)$ of the polynomial 
		\begin{equation*}
			X \mapsto \mu X^2-(\lambda+\mu+\gamma)X+\lambda.
		\end{equation*}  
	\end{proposition} 
	We start with the proof of Proposition \ref{prop:monotony_thm_by_recurrence_xi_zero}. 
	\begin{proof}[Proof of Proposition \ref{prop:monotony_thm_by_recurrence_xi_zero}]
		Note that \eqref{eq:recurrence_xi_zero} is equivalent to
		\begin{equation*}
			\tilde{\lambda}_i (a_{i+1}-a_i) =  \tilde{\mu}_i (a_i - a_{i-1}) +\tilde{\gamma}_i (a_i-a_0).
		\end{equation*}
		First, $a_1-a_0 = a > 0$ by assumption. Fix $i\geq 1$ and suppose that for $j\leq i-1$ the inequality $a_{j+1}-a_j\geq 0$ holds. Hence, in particular, $a_i\geq 0$. Moreover
		\begin{eqnarray*}
		    \tilde{\lambda}_i( a_{i+1} - a_{i} ) =  \tilde{\mu}_i (a_i - a_{i-1}) +\tilde{\gamma}_i \sum_{j=1}^i(a_j-a_{j-1})\geq 0.
		\end{eqnarray*}
		By induction $(a_i)_{i\in\mathbb{N}}$ is non-decreasing and, thus, $a_i\geq 0$ for all $i\in\mathbb{N}$. Moreover, knowing now that for all $i\in\mathbb{N}$ it holds $a_i \geq a_{i-1}\geq 0$ we obtain $\tilde{\lambda}_i a_{i+1} \geq (\tilde{\lambda}_i + \tilde{\gamma}_i)a_i$. Hence, for all $i\in\mathbb{N}$ it holds
		\begin{eqnarray*}
		    a_{i+1} &\geq & \dfrac{\tilde{\lambda}_i + \tilde{\gamma}_i}{\tilde{\lambda}_i}a_i = \left(1+\dfrac{\gamma}{\lambda}\right) a_i\geq \hdots \geq \left(1+\dfrac{\gamma}{\lambda}\right)^i a
		\end{eqnarray*}
		and, thus, $\underset{i\to\infty}{\lim}a_i = \infty$ with at least a geometric rate.
	\end{proof}
	Indeed, the proof can be extend to any case where $\inf_{i\in\mathbb{N}}\tilde{\gamma}_i\,\tilde{\lambda}_i^{-1}>C$ for some positive constant $C>0$. The assumption on the explicit form of the rates can, therefore, be loosened.\par 
	Next, we consider the proof of Proposition \ref{prop:convergence_speed_xi_zero}, exploiting the form of the coefficients given by Assumption \ref{amp:affine_rates}.
	\begin{proof}[Proof of Proposition \ref{prop:convergence_speed_xi_zero}]
		It holds for $i\geq 2$ under Assumption \ref{amp:affine_rates}
		\begin{eqnarray*}
			a_{i-1} a_{i}^{-1} &=& \dfrac{\tilde{\lambda}_{i-1}a_{i-1}}{(\tilde{\lambda}_{i-1} + \tilde{\gamma}_{i-1} + \tilde{\mu}_{i-1})a_{i-1} - \tilde{\mu}_{i-1}a_{i-2}}\\
			&=& \dfrac{\lambda a_{i-1}}{(\lambda + \gamma + \mu )a_{i-1} - \mu\, a_{i-2}}.
		\end{eqnarray*}
		Set $z_i = a_i a_{i+1}^{-1}$ for $i\geq 1$ we therefore have
		\begin{eqnarray*}
			z_1 &=& \dfrac{\lambda }{\lambda + \gamma + \mu}, \qquad
			z_i = \dfrac{\lambda}{\lambda + \gamma + \mu - \mu z_{i-1}}.
		\end{eqnarray*}
		Consider the map $\phi:[0,1]\to[0,1]$ defined by $\phi(x)= \dfrac{\lambda}{\lambda + \gamma + \mu - \mu x}$. Since $0<\phi(0)<\phi(1) < 1$ and $\phi$ is strictly increasing $\phi$ has a unique fixed point $\underset{\bar{}}{a}\in(0,1)$ such that $z_i\to \underset{\bar{}}{a}$ as $i\to\infty$ and we obtain $\lambda = (\lambda + \mu + \gamma)\underset{\bar{}}{a} - \mu \underset{\bar{}}{a}^2$. The value $\underset{\bar{}}{a}< 1$ is thus given by 
		\begin{equation*}
			\underset{\bar{}}{a} = \dfrac{\lambda + \mu+\gamma - \sqrt{(\lambda + \mu+\gamma)^2 - 4\lambda\mu}}{2\mu}.
		\end{equation*}
	\end{proof}
	To prove Theorem \ref{thm:explicit_form_any_solution} and Theorem \ref{thm:minimal_solution_result} we first need to consider a couple of preliminary results. We focus firstly on the dependence of the solution of \eqref{eq:main_recurrence_relation} with respect to the initial value $(0,a)$ for $a \in \mathbb{R}$.  
	\begin{proposition}\label{lem:at_most_one_bdd_sol}
		Under Assumption \ref{amp:affine_rates} let $(a_i)_{i\in\mathbb{N}}$ be a solution to \eqref{eq:main_recurrence_relation}. Then there is at most one initial value $(0,a)$ with $a\in\mathbb{R}$ such that $(a_i)_{i\in\mathbb{N}}$ is bounded.
	\end{proposition}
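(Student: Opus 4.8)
The plan is to exploit the \emph{linearity} of the recurrence \eqref{eq:main_recurrence_relation} in its initial data, reducing the claim to the divergence statement already established in Theorem \ref{thm:monotony_thm_by_recurrence_xi_zero}. Suppose, toward a contradiction, that two initial values $(0,a)$ and $(0,a')$ with $a\neq a'$ both produce bounded solutions $(a_i)_{i\in\mathbb{N}}$ and $(a_i')_{i\in\mathbb{N}}$. First I would form the difference $d_i := a_i - a_i'$. Since the non-homogeneity $-\xi$ and the coefficients $\tilde{\lambda}_i,\tilde{\mu}_i,\tilde{\gamma}_i$ are identical for both sequences, the constant term cancels and $(d_i)_{i\in\mathbb{N}}$ satisfies the \emph{homogeneous} recurrence
\begin{equation*}
\tilde{\lambda}_i d_{i+1} = (\tilde{\lambda}_i + \tilde{\mu}_i + \tilde{\gamma}_i)\, d_i - \tilde{\mu}_i d_{i-1}, \qquad i\geq 1,
\end{equation*}
with initial data $(d_0,d_1) = (0,\,a-a')$. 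Its coefficients still satisfy Assumption \ref{amp:affine_rates}, so this is precisely the $\xi=0$ recurrence \eqref{eq:recurrence_xi_zero} treated in Theorem \ref{thm:monotony_thm_by_recurrence_xi_zero}.

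The second step is to invoke Theorem \ref{thm:monotony_thm_by_recurrence_xi_zero} to show that a nonzero starting difference forces divergence and hence unboundedness of $(d_i)_{i\in\mathbb{N}}$. If $a-a'>0$, then $(d_i)_{i\in\mathbb{N}}$ solves \eqref{eq:recurrence_xi_zero} with positive second initial value, so by Theorem \ref{thm:monotony_thm_by_recurrence_xi_zero} it is non-decreasing with $d_i\to\infty$, and in particular unbounded. If instead $a-a'<0$, I would apply the same theorem to $(-d_i)_{i\in\mathbb{N}}$, which again solves \eqref{eq:recurrence_xi_zero} by linearity of the homogeneous recurrence and now has positive second initial value $a'-a$; hence $-d_i\to\infty$, i.e.\ $d_i\to-\infty$, so $(d_i)_{i\in\mathbb{N}}$ is unbounded.

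In either case $(d_i)_{i\in\mathbb{N}}$ is unbounded, whereas $(d_i) = (a_i)-(a_i')$ is a difference of two bounded sequences and therefore bounded, a contradiction. Thus $a-a'=0$, and at most one initial value can yield a bounded solution. I do not anticipate a genuine obstacle: the argument is a direct superposition reduction to the already-proven divergence of Theorem \ref{thm:monotony_thm_by_recurrence_xi_zero}. The only point requiring care is the sign bookkeeping in the case $a-a'<0$, which is handled cleanly by passing to $(-d_i)_{i\in\mathbb{N}}$ and using the invariance of the homogeneous recurrence under $d_i\mapsto -d_i$.
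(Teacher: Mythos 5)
Your proposal is correct and follows essentially the same route as the paper: both form the difference of the two putative bounded solutions, observe that it satisfies the homogeneous recurrence \eqref{eq:recurrence_xi_zero}, and invoke Theorem \ref{thm:monotony_thm_by_recurrence_xi_zero} to conclude the difference is unbounded, a contradiction. The only cosmetic distinction is that the paper disposes of the sign issue by assuming $x>x'$ without loss of generality, whereas you handle it explicitly by passing to $(-d_i)_{i\in\mathbb{N}}$.
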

	\begin{proof}
		Consider two solutions $(a_i)_{i\in\mathbb{N}}$ and $(a_i')_{i\in\mathbb{N}}$ of the recurrence relation \eqref{eq:main_recurrence_relation} for initial conditions $(0,a)$ and $(0,x')$, respectively, with $x>x'$. The sequence $(\Delta_i)_{i\in\mathbb{N}}:=(a_i - a_i')_{i\in\mathbb{N}}$ satisfies relation \eqref{eq:recurrence_xi_zero} with initial condition $(0,x-x')$. By Proposition \ref{prop:monotony_thm_by_recurrence_xi_zero}, $(\Delta_i)_{i\in\mathbb{N}}$ is a non-decreasing sequence tending to $\infty$ as $i\to\infty$. If there were two bounded sequences for different initial values $x$ and $x'$ also their difference would be bounded which is a contradiction. 
	\end{proof}
	The following lemmas yield step by step an initial value $(0,\hat{a})$ for which the solution $(a_i)_{i\in\mathbb{N}}$ is in fact bounded. 
	\begin{lemma}\label{prop:behavior_above_below_threshhold}
		Under Assumption \ref{amp:affine_rates} consider the solution $(a_i)_{i\in\mathbb{N}}$ to the recurrence relation \eqref{eq:main_recurrence_relation}. There exists a value $\hat{a}>0$ such that,
		\begin{enumerate}
			\item if $a_1 < \hat{a}$, $\lim_{i\to\infty}a_i = -\infty$,
			\item if $a_1 > \hat{a}$, $\lim_{i\to\infty}a_i = +\infty$ .
		\end{enumerate}
	\end{lemma}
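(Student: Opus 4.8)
The plan is to exploit the affine dependence of the solution on its second initial value and thereby reduce the entire dichotomy to the convergence of a single scalar ratio. Let $(h_i)_{i\in\mathbb{N}}$ be the solution of the homogeneous recurrence \eqref{eq:recurrence_xi_zero} with $(h_0,h_1)=(0,1)$, and let $(p_i)_{i\in\mathbb{N}}$ be the solution of \eqref{eq:main_recurrence_relation} with $(p_0,p_1)=(0,0)$. By linearity of the recurrence, the solution with initial value $(0,x)$ is $a_i(x)=x\,h_i+p_i$. Theorem \ref{thm:monotony_thm_by_recurrence_xi_zero} applied with initial datum $(0,1)$ shows that $h_i$ is strictly positive and non-decreasing with $h_i\to+\infty$. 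Writing $r_i:=p_i/h_i$ I can factor $a_i(x)=h_i\,(x+r_i)$, so that the whole statement reduces to proving that $r_i$ converges to a finite limit $-\hat{a}$ with $\hat{a}>0$: for $x>\hat{a}$ one gets $x+r_i\to x-\hat{a}>0$, forcing $a_i(x)\to+\infty$, and for $x<\hat{a}$ one gets $a_i(x)\to-\infty$.

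To obtain monotonicity of $r_i$ I would study the discrete Casoratian $W_i:=p_{i+1}h_i-p_i h_{i+1}$. Since under Assumption \ref{amp:affine_rates} the ratios $\tilde{\mu}_i/\tilde{\lambda}_i=\mu/\lambda$ and $(\tilde{\lambda}_i+\tilde{\mu}_i+\tilde{\gamma}_i)/\tilde{\lambda}_i=(\lambda+\mu+\gamma)/\lambda$ are constant in $i$, inserting the two recurrences yields the first-order relation
\begin{equation*}
	W_i=\frac{\mu}{\lambda}\,W_{i-1}-\frac{\xi\,h_i}{\tilde{\lambda}_i}.
\end{equation*}
Because $W_1=p_2\,h_1-p_1\,h_2=p_2=-\xi/\tilde{\lambda}_1<0$ and $h_i>0$, an immediate induction gives $W_i<0$ for all $i\geq 1$. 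Since $r_{i+1}-r_i=W_i/(h_i h_{i+1})$, the sequence $(r_i)$ is strictly decreasing; combined with $r_1=p_1/h_1=0$ this shows $r_i<0$ for $i\geq 2$, so any finite limit is strictly negative and the resulting $\hat{a}$ is automatically positive.

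The main obstacle is then the single summability estimate that upgrades "strictly decreasing" to "convergent", i.e.\ I must bound $\sum_i |W_i|/(h_i h_{i+1})$. Here I would invoke Theorem \ref{thm:convergence_speed_xi_zero}, which gives $h_{i+1}/h_i\to 1/\underset{\bar{}}{a}=:\rho$, together with the identity $\underset{\bar{}}{a}\,\bar{a}=\lambda/\mu$ coming from the product of the roots of $\mu X^2-(\lambda+\mu+\gamma)X+\lambda$, so that $\mu/\lambda=1/(\underset{\bar{}}{a}\,\bar{a})$ and $\rho=\bar{a}\,(\mu/\lambda)$. From the Casoratian recurrence and the strict inequality $\rho<\mu/\lambda$ (equivalent to $\bar{a}<1$), unrolling yields $|W_i|\leq C\,(\mu/\lambda)^i$, while the two-sided geometric control $h_i h_{i+1}\asymp\rho^{2i}$ coming from the ratio limit gives a general term $\lesssim\bigl((\mu/\lambda)/\rho^2\bigr)^i$. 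The decisive computation is
\begin{equation*}
	\frac{\mu/\lambda}{\rho^2}=\frac{1/(\underset{\bar{}}{a}\,\bar{a})}{1/\underset{\bar{}}{a}^{\,2}}=\frac{\underset{\bar{}}{a}}{\bar{a}}<1,
\end{equation*}
so the series converges geometrically and $r_i$ decreases to a finite limit $r_\infty<0$.

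Setting $\hat{a}:=-r_\infty>0$ then closes the argument via the factorization of the first paragraph, yielding exactly the claimed dichotomy. Both inequalities driving the estimate, $\rho<\mu/\lambda$ and $\mu/\lambda<\rho^2$, reduce through the root identities to $\underset{\bar{}}{a}<\bar{a}<1$, which is part of the hypotheses of Theorem \ref{thm:explicit_form_any_solution}; the only genuinely delicate point is the passage from the ratio limit of Theorem \ref{thm:convergence_speed_xi_zero} to honest two-sided geometric bounds on $h_i$, which is routine once an arbitrarily small $\varepsilon$ of slack is carried through the estimates. As a consistency check, the limiting value $x=\hat{a}$ is the unique initial datum not covered by the two divergence regimes, matching the uniqueness already guaranteed by Proposition \ref{lem:at_most_one_bdd_sol}.
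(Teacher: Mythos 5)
Your route is genuinely different from the paper's. The paper argues qualitatively: it first uses the monotone difference sequence (Theorem \ref{thm:monotony_thm_by_recurrence_xi_zero}) to show that solutions preserve the order of their initial data, then establishes a trichotomy --- any solution that ever becomes negative decreases to $-\infty$, any nonnegative unbounded solution increases to $+\infty$ --- and extracts the threshold by combining this with the uniqueness statement of Proposition \ref{lem:at_most_one_bdd_sol}. Your affine decomposition $a_i(x)=x\,h_i+p_i$ together with the Casoratian recursion $W_i=\tfrac{\mu}{\lambda}W_{i-1}-\xi h_i/\tilde{\lambda}_i$ (valid precisely because Assumption \ref{amp:affine_rates} makes the coefficient ratios independent of $i$, which you rightly note) instead \emph{constructs} $\hat{a}=-\lim_i p_i/h_i$. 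This buys two things the paper leaves implicit: it shows that the set of initial data leading to $-\infty$ is actually bounded above, i.e.\ that the threshold is finite, and it isolates exactly where $\xi>0$ enters (through $W_1=-\xi/\tilde{\lambda}_1<0$; for $\xi=0$ your argument would give $\hat{a}=0$, and indeed the lemma as stated fails then --- an assumption the paper also uses only tacitly).

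However, one intermediate estimate is false as written. You claim $\rho<\mu/\lambda$, ``equivalent to $\bar{a}<1$,'' leaning on the statement of Theorem \ref{thm:explicit_form_any_solution}; but that statement is itself in error. The polynomial $P(X)=\mu X^2-(\lambda+\mu+\gamma)X+\lambda$ satisfies $P(0)=\lambda>0$ and $P(1)=-\gamma<0$, so $\underset{\bar{}}{a}\in(0,1)$ while $\bar{a}>1$ (the paper's own proof of Theorem \ref{thm:minimal_solution_result} explicitly uses $\bar{a}>1$ in the Stolz-C\'esaro step). Hence $\rho=1/\underset{\bar{}}{a}=\bar{a}\,\mu/\lambda>\mu/\lambda$, the inhomogeneity dominates when you unroll the Casoratian recursion, and the correct bound is $|W_i|\le C_\varepsilon(\rho+\varepsilon)^i$ (in fact $|W_i|\asymp\rho^i/i$), not $|W_i|\le C(\mu/\lambda)^i$; your displayed ratio $\mu/(\lambda\rho^2)=\underset{\bar{}}{a}/\bar{a}<1$ is a true inequality, but it is fed by the false bound. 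The damage is local: with the two-sided control $h_i\ge c_\varepsilon(\rho-\varepsilon)^i$ the general term of your series becomes $O\bigl(((\rho+\varepsilon)/(\rho-\varepsilon)^2)^i\bigr)$, and since $1/\rho=\underset{\bar{}}{a}<1$ this is still geometrically summable for $\varepsilon$ small enough. So the decisive inequality is $\underset{\bar{}}{a}<1$, not $\bar{a}<1$, and with this one-line repair your argument is complete and correct.
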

	\begin{proof}
		Consider all unbounded solutions $(a_i)_{i\in\mathbb{N}}$  to \eqref{eq:main_recurrence_relation}. Recall that for solutions $(a_i)_{i\in\mathbb{N}}$ and $(a_i')_{i\in\mathbb{N}}$ of \eqref{eq:main_recurrence_relation} with initial values $x,x'\in \mathbb{R}$ satisfying $x > x'$ we define the sequence $(\Delta_i)_{i\in\mathbb{N}}=(a_i - a_i')_{i\in\mathbb{N}}$. Since $(\Delta_i)_{i\in\mathbb{N}}$ is by Proposition \ref{prop:convergence_speed_xi_zero} non-decreasing we obtain that if for two initial values $x,x'\in \mathbb{R}$ the inequality $a_1 = x > x' = a_1'$ holds then $a_i > a_i'$ for all $i\geq 1$.\par 
		Consider the case where there is a $i_0\in\mathbb{N}$ such that $a_{i_0}<0$ and $a_{i_0-1} \geq 0$. Note that if there is a $k\in\mathbb{N}$ such that $a_{k}<0$ such a pair $(a_{i_0},a_{i_0-1})$ may always be found since $a_0 = 0$. Then, because $\tilde{\gamma}_i > 0$ for all $i\in\mathbb{N}$, we have
		\begin{equation}\label{eq:negative_recurrence_relation}
			\tilde{\lambda}_{i_0} (a_{i_0 + 1} - a_{i_0}) = -\xi + \tilde{\gamma}_{i_0} a_{i_0} + \tilde{\mu}_{i_0}(a_{i_0} - a_{i_0-1}) < 0. 
		\end{equation}
		Thus, $a_{i_0+1} < a_{i_0} < 0$ and, because $\tilde{\gamma}_{i_0} < \tilde{\gamma}_{i_0+1}$, we have 
		\begin{equation*}
			0 > \tilde{\gamma}_{i_0} a_{i_0} > \tilde{\gamma}_{i_0} a_{i_0+1} > \tilde{\gamma}_{i_0+1}a_{i_0+1}.
		\end{equation*}
		Hence, inductively we see that starting from $i_0$ the sequence $(a_i)_{i\geq {i_0}}$ is decreasing and negative. In particular, multiplying the recurrence relation \eqref{eq:negative_recurrence_relation} with $-1$ we obtain that for all $i\geq i_0$
		\begin{equation}
			-a_{i + 1} > \dfrac{\xi}{\tilde{\lambda}_i} + \dfrac{\tilde{\lambda}_i + \tilde{\gamma}_i}{\tilde{\lambda}_i}(-a_i)=\dfrac{\xi}{\tilde{\lambda}_i}+\left(1+\dfrac{\gamma}{\lambda}\right)(-a_i) 
		\end{equation}	
		 such that $(a_i)_{i\geq {i_0}}$ diverges to $-\infty$ as $i\to\infty$. In particular, once the sequence $(a_i)_{i\in\mathbb{N}}$ becomes negative, it stays negative.\par
		Secondly, consider a positive solution $(a_i')_{i\geq 0}$ of \eqref{eq:main_recurrence_relation}. Then, by the unboundedness assumption, it holds that for all $C>0$ there is a $I\in\mathbb{N}$ such that $a_I' \geq C$ and $a_i' < C$ for all $i<I$. Let $C>0$ be sufficiently large. There is a $I\in\mathbb{N}$ with $C\tilde{\gamma}_I>1$ and $a_I' \geq C$ as well as $a_i'<C$ for all $i<I$. Then, by the recurrence relation \eqref{eq:main_recurrence_relation} we obtain
		\begin{eqnarray*}
			\tilde{\lambda}_I(a_{I+1}'-a_I') &=& \tilde{\mu}_I(a_{I}'-a_{I-1}') + \tilde{\gamma}_I a_I' - \xi > \tilde{\gamma}_I C - \xi >0.
		\end{eqnarray*}
		Hence, $a_{I+1}'>a_I'$ and, thus, $\tilde{\gamma}_{I+1} a_{I+1}'> \tilde{\gamma}_I a_I'>1$. Furthermore, there is a $\varepsilon>0$ such that $a_{I+1}'\geq C+\varepsilon$ and $a_{I}'< C+\varepsilon$. Applying the same arguments to $a_{I+1}'$ with a new constant $C'$ set to $C+\varepsilon$ yields inductively that $(a_i')_{i\geq N}$ is strictly increasing, by assumption unbounded and, therefore, divergent to $+\infty$. \par
		Therefore, since two solutions preserve the order of their initial values over time and by the fact that by Proposition \ref{lem:at_most_one_bdd_sol} there is at most one bounded solution to \eqref{eq:main_recurrence_relation}, there is a $\hat{a}>0$ such that for $a_1> \hat{a}$, the solution to \eqref{eq:main_recurrence_relation} tends to $\infty$, while for $a_1< \hat{a}$ it tends to $-\infty$. 
	\end{proof}
	In fact, we can find explicitly the unique initial value $a_1=\hat{a}$ such that the resulting solution to \eqref{eq:main_recurrence_relation} is bounded. We employ generating functions to approach this question. 
	\begin{lemma}\label{thm:explicit_generating_function_form}
		Under Assumption \ref{amp:affine_rates} and with $q = \lambda + \mu + \gamma$ consider the solution $(a_i)_{i\in\mathbb{N}}$ to the recurrence relation \eqref{eq:main_recurrence_relation} with initial value $(0,a)$ for $a\in\mathbb{R}$. Its generating function $\mathcal{E}(z) := \sum_{i=0}^{\infty}a_i z^i$ satisfies
		\begin{equation}
			\mathcal{E}(z) = \dfrac{z\lambda x - z\xi\sum_{k=1}^{\infty}\frac{z^k}{k+\mathfrak{n}}}{\lambda + \mu z^2 - q z}
		\end{equation}
		within its radius of convergence $R\in[0,\infty)$.
	\end{lemma}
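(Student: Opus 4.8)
The plan is to exploit the special algebraic structure forced by Assumption \ref{amp:affine_rates}. Since the common factor $(i+\mathfrak{n})$ appears identically in $\tilde{\lambda}_i$ and $\tilde{\mu}_i$, and also in $\tilde{\lambda}_i+\tilde{\mu}_i+\tilde{\gamma}_i = q\,(i+\mathfrak{n})$, dividing the recurrence \eqref{eq:main_recurrence_relation} by $(i+\mathfrak{n})$ collapses it, for every $i\geq 1$, to the constant-coefficient relation
\begin{equation*}
	\lambda\, a_{i+1} - q\, a_i + \mu\, a_{i-1} = -\dfrac{\xi}{i+\mathfrak{n}}.
\end{equation*}
This is the crucial reduction: the variable coefficients cancel out of the left-hand side and survive only as the harmonic-type forcing term $-\xi/(i+\mathfrak{n})$ on the right. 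It is precisely the coincidence of the shift $\mathfrak{n}$ across all three coefficient sequences that makes this cancellation possible.

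Next I would pass to generating functions. Multiplying the displayed relation by $z^{i+1}$ and summing over $i\geq 1$, I would treat the three left-hand terms by the standard index shifts, using the initial data $a_0=0$ and $a_1=x$: the term $\lambda\sum_{i\geq 1}a_{i+1}z^{i+1}$ contributes $\lambda\,\mathcal{E}(z)-\lambda x z$, the term $-q\sum_{i\geq 1}a_i z^{i+1}$ contributes $-qz\,\mathcal{E}(z)$ (here $a_0=0$ disposes of the missing $i=0$ summand), and $\mu\sum_{i\geq 1}a_{i-1}z^{i+1}$ contributes $\mu z^2\,\mathcal{E}(z)$. On the right-hand side the forcing produces $-\xi z\sum_{k\geq 1}z^k/(k+\mathfrak{n})$. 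Collecting everything yields the single algebraic identity
\begin{equation*}
	\bigl(\lambda+\mu z^2 - qz\bigr)\,\mathcal{E}(z) = z\lambda x - z\xi\sum_{k\geq 1}\dfrac{z^k}{k+\mathfrak{n}},
\end{equation*}
and solving for $\mathcal{E}(z)$ gives exactly the claimed formula.

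The only genuine subtlety, and hence the step to be careful about, is justifying these manipulations as an identity of analytic functions rather than merely of formal power series. I would first note that the formal power series identity holds unconditionally, since the coefficient comparison is equivalent to the recurrence term by term. To upgrade it on the disk $\{|z|<R\}$ I would observe that the forcing series $\sum_{k\geq 1}z^k/(k+\mathfrak{n})$ has radius of convergence $1$, while any solution $(a_i)_{i\in\mathbb{N}}$ grows at most geometrically — either directly from the recurrence, or via the explicit divergence rates behind Theorems \ref{thm:monotony_thm_by_recurrence_xi_zero} and \ref{thm:convergence_speed_xi_zero} — so that $\mathcal{E}$ has a well-defined radius of convergence $R\in[0,\infty)$ and all the series above converge absolutely on $\{|z|<R\}$. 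The rearrangements and index shifts are then legitimate there, which promotes the formal identity to an analytic one and completes the proof.
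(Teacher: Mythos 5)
Your proof is correct and follows essentially the same route as the paper: you cancel the common factor $(i+\mathfrak{n})$ to reduce \eqref{eq:main_recurrence_relation} to a constant-coefficient relation with harmonic forcing and then apply the standard index shifts to $\mathcal{E}$, exactly as the paper does by dividing through by $\tilde{q}_i = q(i+\mathfrak{n})$ and summing $a_i z^i$. Your closing paragraph on promoting the formal power-series identity to an analytic one (via the geometric growth bound from the recurrence) is a justification the paper defers to Lemma \ref{minimal_radius_of_convergence_E} and is a welcome addition, not a deviation.
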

	\begin{proof}
		Set $q = \lambda+\gamma+\mu$ and $\tilde{q}_i = q(i+\mathfrak{n})$ such that \eqref{eq:main_recurrence_relation} becomes for $i\geq 1$, 
		\begin{equation*}
			\tilde{q}_i a_{i} = \tilde{\lambda}_i a_{i+1} + \tilde{\mu}_i a_{i-1} + \xi.
		\end{equation*} 
		By exploiting this form of the recurrence relation \eqref{eq:main_recurrence_relation} for $i\geq 1$ we obtain the following.
		\begin{eqnarray*}
			\mathcal{E}(z) &=& \sum_{i=1}^{\infty}a_i z^i  = \sum_{i=1}^{\infty}\tilde{q}_i a_i \dfrac{z^i}{\tilde{q}_i} =  \sum_{i=1}^{\infty}\left(\tilde{\lambda}_i a_{i+1}  + \tilde{\mu}_i a_{i-1} + \xi\right)\dfrac{z^i}{\tilde{q}_i}\\
			&=& \dfrac{\lambda}{q}\sum_{i=1}^{\infty} a_{i+1}z^i  + \dfrac{\mu}{q}\sum_{i=1}^{\infty} a_{i-1}z^i + \xi\sum_{i=1}^{\infty}\dfrac{z^i}{\tilde{q}_i}\\
			&=& \dfrac{\lambda}{qz}\sum_{i=1}^{\infty} a_{i+1}z^{i+1}  + \dfrac{\mu}{q}z\sum_{i=1}^{\infty} a_i z^i + \xi\sum_{i=1}^{\infty}\dfrac{z^i}{\tilde{q}_i}\\
			&=& \dfrac{\lambda}{qz}\left(\mathcal{E}(z)  - xz \right)+ \dfrac{\mu}{q}z\mathcal{E}(z) + \xi\sum_{i=1}^{\infty}\dfrac{z^i}{\tilde{q}_i}.
		\end{eqnarray*}
		Equivalently, we have
		\begin{equation*}
			\mathcal{E}(z) = \dfrac{z\lambda x - z\xi\sum_{k=1}^{\infty}\frac{z^k}{k+\mathfrak{n}}}{\lambda + \mu z^2 - q z}.
		\end{equation*}
	\end{proof}
	Based on the explicit form of the generating function we derive an explicit expression of the coefficients by differentiating in $0$. To justify this approach we have to establish that $\mathcal{E}$ converges within a positive radius of convergence.
	\begin{lemma}\label{minimal_radius_of_convergence_E}
		Under Assumption \ref{amp:affine_rates}	consider the solution $(a_i)_{i\in\mathbb{N}}$ to the recurrence relation \eqref{eq:main_recurrence_relation} with initial value $(0,a)$ for $a>0$. Its generating function $\mathcal{E}$ has a positive radius of convergence $R$.
	\end{lemma}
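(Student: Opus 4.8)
The plan is to show that $|a_i|$ grows at most geometrically in $i$ and then to invoke the Cauchy--Hadamard formula $R = \big(\limsup_{i\to\infty}|a_i|^{1/i}\big)^{-1}$, so that a finite growth rate immediately gives $R>0$. The key observation is that, under Assumption \ref{amp:affine_rates}, the common factor $(i+\mathfrak{n})$ carried by $\tilde\lambda_i,\tilde\mu_i,\tilde\gamma_i$ cancels upon dividing \eqref{eq:main_recurrence_relation} by $\tilde\lambda_i$, so that the recurrence collapses to the \emph{constant-coefficient} form
\begin{equation*}
a_{i+1} = p\,a_i - r\,a_{i-1} - s_i, \qquad p := \tfrac{\lambda+\mu+\gamma}{\lambda},\quad r := \tfrac{\mu}{\lambda},\quad s_i := \tfrac{\xi}{\lambda(i+\mathfrak{n})},
\end{equation*}
for $i\geq 1$, where the forcing term is uniformly bounded: $0 \le s_i \le s := \xi/(\lambda\mathfrak{n})$ for every $i\geq 1$.

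Passing to absolute values and using the triangle inequality gives $|a_{i+1}| \le p\,|a_i| + r\,|a_{i-1}| + s$. I would then introduce the dominating sequence $(c_i)_{i\in\mathbb{N}}$ defined by $c_0 = |a_0|$, $c_1 = |a_1|$ and $c_{i+1} = p\,c_i + r\,c_{i-1} + s$ for $i\geq 1$. Since $p,r,s\geq 0$, a straightforward induction yields $|a_i| \le c_i$ for all $i$. The sequence $(c_i)$ solves a linear recurrence with constant coefficients, whose homogeneous characteristic polynomial $X^2 - pX - r$ has the positive dominant root $\rho := \tfrac12\big(p + \sqrt{p^2+4r}\big)$.

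Writing $c_i$ explicitly as the sum of a constant particular solution (which exists since $p+r\neq 1$) and the homogeneous part spanned by the powers of the two characteristic roots, one obtains $c_i \le K\,\rho^{\,i}$ for some constant $K>0$ depending only on $\lambda,\mu,\gamma,\xi,\mathfrak{n}$ and $|a_1|$. Consequently $\limsup_{i\to\infty}|a_i|^{1/i} \le \limsup_{i\to\infty} c_i^{1/i} \le \rho < \infty$, and the Cauchy--Hadamard formula delivers $R \ge \rho^{-1} > 0$, as claimed.

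The argument is essentially routine once the coefficient cancellation is noticed, so I do not expect a genuine obstacle; the only point requiring a little care is the sign flip produced by the triangle inequality, which replaces the true homogeneous characteristic polynomial $X^2 - pX + r$ of the original recurrence by $X^2 - pX - r$ for the dominating sequence. This is harmless, because we need only an upper bound on the growth rate (and hence a lower bound on $R$), not its exact value. In effect the lemma merely certifies that the formal manipulations carried out in Lemma \ref{thm:explicit_generating_function_form} take place inside a genuine disk of convergence, legitimising the coefficient extraction used later to prove Theorems \ref{thm:explicit_form_any_solution} and \ref{thm:minimal_solution_result}.
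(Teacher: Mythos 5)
Your proof is correct, but it takes a genuinely different route from the paper's. The paper's own proof is comparison-based: it takes two unbounded solutions whose initial values straddle the threshold $\hat{a}$ of Lemma \ref{prop:behavior_above_below_threshhold}, notes that their difference $(\Delta_i)_{i\in\mathbb{N}}$ solves the homogeneous relation \eqref{eq:recurrence_xi_zero} and hence, by Theorem \ref{thm:convergence_speed_xi_zero}, grows at the rate $\underset{\bar{}}{a}^{-i}$, and uses $\Delta_i\geq\max\{a_i,|a_i'|\}$ (valid once $a_i'<0$) to cap the growth of every solution, obtaining the sharp bound $R\geq\underset{\bar{}}{a}$. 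You instead exploit the cancellation of the common factor $(i+\mathfrak{n})$, dominate $|a_i|$ by the solution of the constant-coefficient recurrence $c_{i+1}=p\,c_i+r\,c_{i-1}+s$, and finish with Cauchy--Hadamard; all your steps check out: $p+r=(\lambda+2\mu+\gamma)/\lambda>1$, so the constant particular solution exists and is negative (hence harmless for the upper bound), the dominant root $\rho=\tfrac12(p+\sqrt{p^2+4r})$ controls $c_i$, and the sign flip from the triangle inequality only inflates the growth rate, which is all you need. Your argument is more elementary and self-contained --- it uses neither the dichotomy of Lemma \ref{prop:behavior_above_below_threshhold} nor Theorem \ref{thm:convergence_speed_xi_zero}, and works verbatim for any real initial value --- but it buys only $R\geq\rho^{-1}$, where $\rho$ is the positive root of $\lambda X^2-(\lambda+\mu+\gamma)X-\mu$; since $\rho$ strictly exceeds $1/\underset{\bar{}}{a}$ (the dominant root of $\lambda X^2-(\lambda+\mu+\gamma)X+\mu$), this is a strictly weaker radius bound than the paper's. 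That weaker bound still proves the lemma as stated, and it causes no downstream damage: Lemma \ref{lem:small_initial_value} does evaluate $\mathcal{E}$ at points of $(\underset{\bar{}}{a}-\bar{\delta},\underset{\bar{}}{a})$, which nominally needs $R\geq\underset{\bar{}}{a}$, but once \emph{any} positive radius is secured, the closed form of Lemma \ref{thm:explicit_generating_function_form} identifies $\mathcal{E}$ with a function whose nearest singularity lies at distance $\underset{\bar{}}{a}$ from the origin, so the radius upgrades to $\underset{\bar{}}{a}$ automatically.
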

	\begin{proof}
		Let $\hat{a}$ as in Lemma \ref{prop:behavior_above_below_threshhold}. Consider solutions $(a_i)_{i\in\mathbb{N}}$ and $(a_i')_{i\in\mathbb{N}}$ to \eqref{eq:main_recurrence_relation} with initial conditions $(0,a)$ and $(0,a')$, respectively, where $a>\hat{a}$ as well as $a'<\hat{a}$. Recall $(\Delta_i)_{i\in\mathbb{N}}= (a_i-a_i')_{i\in\mathbb{N}}$. Since there is a $i_0\in\mathbb{N}$ such that $a_i'<0$ for all $i\geq i_0$ it follows for $i\geq i_0$
		\begin{equation*}
			\Delta_i \geq \max\{a_i,|a_i'|\}.
		\end{equation*}
		Additionally $(\Delta_i)_{i\in\mathbb{N}}$ satisfies \eqref{eq:recurrence_xi_zero} so according to Proposition \ref{prop:convergence_speed_xi_zero} it grows like $\Delta_i\sim \underset{\bar{}}{a}^{-i}$ as $i\to\infty$. We obtain that $(a_i)_{i\in\mathbb{N}}$ and $(a_i')_{i\in\mathbb{N}}$ grow at most with speed $\underset{\bar{}}{a}^{-i}$ as $i\to\infty$. Note that the solution $(\hat{a}_i)_{i\in\mathbb{N}}$ of \eqref{eq:main_recurrence_relation} with initial value $(0,\hat{a})$ cannot grow faster than $\underset{\bar{}}{a}^{-i}$ as $i\to\infty$ by positiveness of $\Delta_i$ for all $i\geq 1$. Thus, the series
		\begin{equation*}
			\mathcal{E}(z) = \sum_{i=1}^{\infty}a_i z^i
		\end{equation*}
		converges for any initial value at least within the radius of convergence $R = \underset{\bar{}}{a}$.
	\end{proof}
	Having established a positive radius of convergence we proceed with a direct calculation of the coefficients. Recall that $\underset{\bar{}}{a}<\bar{a}$ are the distinct real zeros of the polynomial $X \mapsto \mu X^2-(\lambda+\mu+\gamma)X+\lambda$.
	\begin{proof}[Proof of Theorem \ref{thm:explicit_form_any_solution}]
		According to \ref{thm:explicit_generating_function_form} and \ref{minimal_radius_of_convergence_E} the introduced generating function $\mathcal{E}$ has a positive radius of convergence and the general form
		\begin{equation*}
			\mathcal{E}(z) = z\dfrac{f(z)}{g(z)}. 
		\end{equation*}
		Note first, that by that form we have 
		\begin{equation*}
			\partial_z^i\mathcal{E}(z)\bigg\vert_{z=0} = i\,\partial_z^{i-1}\left(\dfrac{f(z)}{g(z)}\right)\bigg\vert_{z=0}= i\sum_{k=0}^{i-1}\binom{i-1}{k}\left(\partial_z^{k}f(z)\partial_z^{i-1-k} (g(z)^{-1})\right)\bigg\vert_{z=0} .
		\end{equation*}
		Set $ c  = (\sqrt{(\lambda+\mu+\gamma)^2-4\lambda\mu})^{-1}$. Then we have for $g(z)=\lambda-q z + \mu z^2$ the expression
		\begin{equation*}
			g(z)^{-1} =\dfrac{ c }{z-\bar{a}} - \dfrac{ c }{z-\underset{\bar{}}{a}},
		\end{equation*}
		which gives canonically any derivative of order $n$ of $g(z)^{-1}$ at $0$, namely,
		\begin{equation}
			\partial_z^i g(z)^{-1}\bigg\vert_{z=0} =  c  i!\left(-\dfrac{1}{\bar{a}^{i+1}}+ \dfrac{1}{\underset{\bar{}}{a}^{i+1}}\right).
		\end{equation}
		Considering the numerator $f(z)= \lambda x - \xi\sum_{k=1}^{\infty}\frac{z^k}{k+\mathfrak{n}}$, we find that for $i\geq 1$ we have
		\begin{eqnarray*}
			\partial_z^i f(z)\bigg\vert_{z=0} = -\xi\sum_{k=i}^{\infty} \dfrac{z^{k-i}}{k+ \mathfrak{n} }k\cdot(k-1)\cdot\hdots\cdot (k-i+1)\bigg\vert_{z=0} = -\xi\dfrac{i!}{i+ \mathfrak{n} }.
		\end{eqnarray*}
		Hence,
		\begin{eqnarray*}
			\partial_z^i\mathcal{E}(z)\bigg\vert_{z=0} &=& i\,\partial_z^{i-1}\left(\dfrac{f(z)}{g(z)}\right)\bigg\vert_{z=0}\\
			&=& i\left(\lambda\,x\,(i-1)! c \left(\dfrac{1}{\underset{\bar{}}{a}^i}-\dfrac{1}{\bar{a}^i}\right)\right. \nonumber\\
			&&\qquad \left. -\sum_{k=1}^{i-1}\binom{i-1}{k} \dfrac{k!\,\xi}{k+ \mathfrak{n} } c  (i-1-k)!\left(-\dfrac{1}{\bar{a}^{i-k}}+ \dfrac{1}{\underset{\bar{}}{a}^{i-k}}\right)\right)\nonumber\\
			&=& i!\left(\lambda\,x\, c \left(\dfrac{1}{\underset{\bar{}}{a}^i}-\dfrac{1}{\bar{a}^i}\right)-\sum_{k=1}^{i-1}\dfrac{ c\,\xi }{k+ \mathfrak{n} } \left(-\dfrac{1}{\bar{a}^{i-k}}+ \dfrac{1}{\underset{\bar{}}{a}^{i-k}}\right)\right)\nonumber\\
		\end{eqnarray*}
		Since $a_i= i!^{-1}\partial_z^i\mathcal{E}(z)\bigg\vert_{z=0} $, we find that for any $i\geq 1$
		\begin{equation*}
			a_i = \lambda\,x\,c \left(\dfrac{1}{\underset{\bar{}}{a}^i}-\dfrac{1}{\bar{a}^i}\right)-\sum_{k=1}^{i-1}\dfrac{ c\,\xi }{k+ \mathfrak{n} } \left(-\dfrac{1}{\bar{a}^{i-k}}+ \dfrac{1}{\underset{\bar{}}{a}^{i-k}}\right).
		\end{equation*}
	\end{proof}
	To improve readability, we use the following notation. For $z\in\mathbb{R}$ with $|z|<1$ and $n\in\mathbb{N}, n\geq 1$ we use the notation
	\begin{equation}
		\Phi_{n}(z) := \sum_{k=0}^{\infty}\dfrac{z^k}{n+k},
	\end{equation}
	leaning on the  Lerch transcendent $\Phi(n,1,z)$, see \cite{Lerch1900}. 
	\begin{lemma}\label{lem:small_initial_value}
		Under Assumption \ref{amp:affine_rates} any sequence $(a_i)_{i\in\mathbb{N}}$ satisfying \eqref{eq:main_recurrence_relation} with initial values $x_0 = 0$ and $a_1<\hat{a}$ where
		\begin{equation}\label{eq:x_hat_minimal_solution_Phi}
			\hat{a}	:= \dfrac{\xi}{\lambda}\left(\Phi_{\mathfrak{n}}(\underset{\bar{}}{a})-  \mathfrak{n} ^{-1}\right).
		\end{equation}	 	
		tends to $-\infty$ as $i\to\infty$.
	\end{lemma}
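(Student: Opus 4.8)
The plan is to read off the asymptotics of an arbitrary solution directly from its closed form, which is available from Theorem \ref{thm:explicit_form_any_solution}, by separating the two competing geometric scales $\underset{\bar{}}{a}^{-i}$ and $\bar{a}^{-i}$. Writing $x=a_1$ and using $\underset{\bar{}}{a}^{-(i-k)}=\underset{\bar{}}{a}^{-i}\underset{\bar{}}{a}^{k}$ together with $\bar{a}^{-(i-k)}=\bar{a}^{-i}\bar{a}^{k}$ inside the sum, the explicit formula regroups as
\begin{equation*}
a_i = \underset{\bar{}}{a}^{-i}\Bigl(\lambda x c - c\xi\sum_{k=1}^{i-1}\frac{\underset{\bar{}}{a}^{k}}{k+\mathfrak{n}}\Bigr) + \bar{a}^{-i}\Bigl(-\lambda x c + c\xi\sum_{k=1}^{i-1}\frac{\bar{a}^{k}}{k+\mathfrak{n}}\Bigr).
\end{equation*}
Since $0<\underset{\bar{}}{a}<\bar{a}<1$, both partial sums converge as $i\to\infty$, and, crucially, $\underset{\bar{}}{a}^{-i}$ grows strictly faster than $\bar{a}^{-i}$.

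Next I would identify the limit of the coefficient multiplying the dominant scale $\underset{\bar{}}{a}^{-i}$. Because $\sum_{k=1}^{\infty}\underset{\bar{}}{a}^{k}/(k+\mathfrak{n}) = \Phi_{\mathfrak{n}}(\underset{\bar{}}{a}) - \mathfrak{n}^{-1}$, the first bracket tends to
\begin{equation*}
c\Bigl(\lambda x - \xi\bigl(\Phi_{\mathfrak{n}}(\underset{\bar{}}{a}) - \mathfrak{n}^{-1}\bigr)\Bigr) = c\lambda\,(x - \hat{a}),
\end{equation*}
by the definition \eqref{eq:x_hat_minimal_solution_Phi} of $\hat{a}$. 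As $c,\lambda>0$, the hypothesis $a_1=x<\hat{a}$ makes this limit strictly negative; moreover the sum is increasing in $i$, so the bracket decreases to its limit and is therefore below some fixed $-\delta<0$ for all large $i$, where one may take $\delta:=c\lambda(\hat{a}-x)/2$.

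The main step to make rigorous is that a leading coefficient with negative limit forces $a_i\to-\infty$ in spite of the oppositely oriented second term. Here I would note that the second bracket converges (again since $\bar{a}<1$) and is hence bounded in absolute value by some constant $M$. Factoring out $\underset{\bar{}}{a}^{-i}$ then gives, for $i$ large enough,
\begin{equation*}
a_i \le \underset{\bar{}}{a}^{-i}\Bigl(-\delta + M\,(\underset{\bar{}}{a}/\bar{a})^{i}\Bigr),
\end{equation*}
and since $\underset{\bar{}}{a}/\bar{a}<1$ the parenthesis is eventually below $-\delta/2$. Consequently $a_i\le -(\delta/2)\,\underset{\bar{}}{a}^{-i}\to-\infty$, which is the claim. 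I expect the only point requiring care to be this dominance bookkeeping, i.e. uniformly controlling the subdominant $\bar{a}^{-i}$ contribution against the faster-growing $\underset{\bar{}}{a}^{-i}$ term; the convergence of both coefficient sums (guaranteed by $\underset{\bar{}}{a},\bar{a}\in(0,1)$) renders this routine, so no genuine obstacle remains beyond the separation of scales itself.
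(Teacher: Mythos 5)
Your overall route is legitimately different from the paper's and, once repaired, works: the paper proves this lemma by showing the generating function $\mathcal{E}(z)$ becomes negative for $z$ slightly below $\underset{\bar{}}{a}$ (using continuity of $\Phi_{\mathfrak{n}}$ and the choice $\varepsilon=\delta\lambda/(2\xi)$), concluding that some $a_{i_0}<0$, and then invoking the once-negative-always-decreasing mechanism from Lemma \ref{prop:behavior_above_below_threshhold}; you instead read the divergence directly off the closed form of Theorem \ref{thm:explicit_form_any_solution} by separating the two geometric scales. That is non-circular (Theorem \ref{thm:explicit_form_any_solution} is proved from the generating-function lemmas, not from this lemma), your regrouping of the sum is algebraically correct, and your identification of the dominant coefficient's limit as $c\lambda(x-\hat{a})<0$ via \eqref{eq:x_hat_minimal_solution_Phi} is exactly right, as is the monotonicity argument giving the uniform bound $-\delta$.

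However, there is a genuine error in the subdominant bookkeeping: you assert $0<\underset{\bar{}}{a}<\bar{a}<1$, so that $\sum_{k=1}^{i-1}\bar{a}^{k}/(k+\mathfrak{n})$ converges and the second bracket is bounded by a constant $M$. This is false: evaluating the polynomial at $X=1$ gives $\mu-(\lambda+\mu+\gamma)+\lambda=-\gamma<0$, so $\underset{\bar{}}{a}\in(0,1)$ but $\bar{a}>1$. (The statement of Theorem \ref{thm:explicit_form_any_solution} contains the same misprint, but the paper's proof of Theorem \ref{thm:minimal_solution_result} explicitly uses $\bar{a}>1$ and Stolz--C\'esaro, which likely misled you.) Consequently your second bracket diverges to $+\infty$ and the displayed inequality $a_i\le \underset{\bar{}}{a}^{-i}\bigl(-\delta+M(\underset{\bar{}}{a}/\bar{a})^{i}\bigr)$ is unsupported as written. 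The gap is repairable by exactly the estimate the paper uses for this term: with $b_i=\sum_{k=1}^{i-1}\bar{a}^{k}/(k+\mathfrak{n})$ one has
\begin{equation*}
\bar{a}^{-i}b_i \;\le\; \frac{1}{(1+\mathfrak{n})}\,\bar{a}^{-i}\,\frac{\bar{a}^{i}-\bar{a}}{\bar{a}-1}\;\le\;\frac{1}{(1+\mathfrak{n})(\bar{a}-1)},
\end{equation*}
and in fact $\bar{a}^{-i}b_i\to 0$ by Stolz--C\'esaro, so the whole second term $\bar{a}^{-i}\bigl(-\lambda x c+c\xi b_i\bigr)$ is bounded (indeed vanishing). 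Then $a_i\le -\delta\,\underset{\bar{}}{a}^{-i}+C\to-\infty$, which completes your argument; note it even yields the divergence rate $a_i\lesssim-\underset{\bar{}}{a}^{-i}$, which the paper's sign-of-$\mathcal{E}$ argument does not directly provide.
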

	\begin{proof}
		Assume that $a_1 < \dfrac{1}{\lambda}\left(\Phi_{\mathfrak{n}}(\underset{\bar{}}{a}) -  \mathfrak{n} ^{-1}\right)$, i.e., there is a $\delta > 0$ such that
		\begin{equation}\label{initial_value_relation_delta}
			a_1 = \dfrac{\xi}{\lambda}\left(\Phi_{\mathfrak{n}}(\underset{\bar{}}{a}) -  \mathfrak{n} ^{-1}\right)-\delta.
		\end{equation}
		Since $\Phi_{\mathfrak{n}}(\cdot)$ is continuous and $\underset{\bar{}}{a} < 1$, we find that its left limit for $z\nearrow \underset{\bar{}}{a}$ exists such that
		\begin{equation*}
			\forall \varepsilon > 0\,\exists\, \bar{\delta}> 0\,\forall z:\; \underset{\bar{}}{a} - z < \bar{\delta} \Rightarrow  \Phi_{\mathfrak{n}}(\underset{\bar{}}{a})  - \Phi_{\mathfrak{n}}(z)  <  \varepsilon. 
		\end{equation*}
		Note that $\mu z^2 - q z + \lambda > 0$ for all $z<\underset{\bar{}}{a}$. Choose $\varepsilon = \dfrac{\delta\lambda}{2\xi}$ such that for all $z\in [0,\underset{\bar{}}{a})\cap (\underset{\bar{}}{a}-\bar{\delta},\underset{\bar{}}{a})$ we obtain by \eqref{initial_value_relation_delta} that
		\begin{eqnarray*}
			\mathcal{E}(z) &=&  \dfrac{z\left(\xi\Phi_{\mathfrak{n}}(\underset{\bar{}}{a})  -  \xi\mathfrak{n} ^{-1}-\lambda\delta - \xi\Phi_{\mathfrak{n}}(z)  +  \xi\mathfrak{n} ^{-1}\right)}{\lambda + \mu z^2 - q z}\\
			&<&  \dfrac{z\left(\xi\varepsilon-\lambda\delta \right)}{\lambda + \mu z^2 - q z} = -\dfrac{z\lambda\delta}{2(\lambda + \mu z^2 - q z)} < 0.
		\end{eqnarray*}
		Since positivity of the sequence $(a_i)_{i\in\mathbb{N}}$ would imply that $\mathcal{E}(z)>0$ for all $z>0$ we find that for $a_1 < \dfrac{1}{\lambda}\left(\Phi_{\mathfrak{n}}(\underset{\bar{}}{a}) -  \mathfrak{n} ^{-1}\right)$ there is an index $i_0\in\mathbb{N}$ such that $x_{i_0} < 0$. Consequently, $a_i$ tends to $-\infty$ as $i\to\infty$ by Lemma \ref{prop:behavior_above_below_threshhold}.
	\end{proof}
	The necessity of $\xi>0$ becomes evident in the choice of $\varepsilon$ in the proof. In fact, it turns out that the value $\hat{a}$ yields the minimal positive solution of \eqref{eq:main_recurrence_relation} because any other smaller initial value leads to a sequence diverging to $-\infty$. 
	\begin{proof}[Proof of Theorem \ref{thm:minimal_solution_result}]
		Take $(a_0,a_1)=(0,\hat{a})$. In Theorem \ref{thm:explicit_form_any_solution} the following identity is given.
		\begin{eqnarray}
			a_i &=& \lambda\, \hat{a}\, c \left(\dfrac{1}{\underset{\bar{}}{a}^i}-\dfrac{1}{\bar{a}^i}\right)-\sum_{k=1}^{i-1}\dfrac{ c\xi }{k+ \mathfrak{n} } \left(-\dfrac{1}{\bar{a}^{i-k}}+ \dfrac{1}{\underset{\bar{}}{a}^{i-k}}\right)\nonumber\\
			&=& \label{eq:extended_form_of_ai}\sum_{k=1}^{\infty} \dfrac{\underset{\bar{}}{a}^{k}}{k+\mathfrak{n}}\, c\xi \left(\dfrac{1}{\underset{\bar{}}{a}^i}-\dfrac{1}{\bar{a}^i}\right)-\sum_{k=1}^{i-1}\dfrac{ c\xi }{k+ \mathfrak{n} } \left(-\dfrac{1}{\bar{a}^{i-k}}+ \dfrac{1}{\underset{\bar{}}{a}^{i-k}}\right).
		\end{eqnarray}
		Define for $i\geq 1$ the sequence $(b_i)_{i\geq 1}$ with $b_i = \sum_{k=1}^{i-1}\dfrac{ \bar{a}^{k} }{k+ \mathfrak{n} }$. Then
		\begin{eqnarray*}
			\sum_{k=1}^{i-1}\dfrac{ 1 }{k+ \mathfrak{n} } \dfrac{1}{\bar{a}^{i-k}} &=& \dfrac{1}{\bar{a}^{i}}\sum_{k=1}^{i-1}\dfrac{ \bar{a}^{k} }{k+ \mathfrak{n} } = \dfrac{b_i}{\bar{a}^{i}}.
		\end{eqnarray*}
		Furthermore, it holds
		\begin{eqnarray}
			\dfrac{b_{i+1}-b_i}{\bar{a}^{i+1}-\bar{a}^{i}} &=& \dfrac{1}{\bar{a}^{i}(\bar{a}-1)}\dfrac{\bar{a}^{i}}{i+\mathfrak{n}} = \dfrac{1}{(\bar{a}-1)(i+\mathfrak{n})}\to 0,\qquad i\to\infty.
		\end{eqnarray}
		Using $\bar{a}>1$ and employing the Stolz-C\'esaro theorem we find that
		\begin{equation*}
			\dfrac{1}{\bar{a}^{i}}\sum_{k=1}^{i-1}\dfrac{ \bar{a}^{k} }{k+ \mathfrak{n} } \to 0,\;\text{as } i\to\infty,
		\end{equation*}
		as well. Thus, the limiting behavior of $(a_i)_{i\geq 0}$ is the same as of
		\begin{equation}\label{eq:lower_bound_extended_ai}
			\dfrac{c}{\underset{\bar{}}{a}^i}\left(\lambda\, \hat{a} - \sum_{k=1}^{\infty} \dfrac{\xi\underset{\bar{}}{a}^{k}}{k+\mathfrak{n}}\right)= \dfrac{c}{\underset{\bar{}}{a}^i}\sum_{k=i}^{\infty} \dfrac{\xi\underset{\bar{}}{a}^{k}}{k+\mathfrak{n}}\geq \dfrac{c\xi}{i+\mathfrak{n}}.
		\end{equation}	 
		Secondly, using the integral comparison, it holds 
		\begin{eqnarray}
			\sum_{k=i}^{\infty} \dfrac{\underset{\bar{}}{a}^{k}}{k+\mathfrak{n}} &\leq & \dfrac{\underset{\bar{}}{a}^{i}}{i+\mathfrak{n}} + \int_i^{\infty}\dfrac{\underset{\bar{}}{a}^{s}}{s+\mathfrak{n}} ds\nonumber\\
			&\leq & \dfrac{\underset{\bar{}}{a}^{i}}{i+\mathfrak{n}} + \dfrac{\underset{\bar{}}{a}^{-\mathfrak{n}}}{i+\mathfrak{n}}\int_{i+\mathfrak{n}}^{\infty}\mathrm{exp}(\log\underset{\bar{}}{a}{s}) ds\nonumber\\
			&= &\label{eq:link_to_log} \dfrac{\underset{\bar{}}{a}^{i}}{i+\mathfrak{n}} + \dfrac{\underset{\bar{}}{a}^{i}}{(i+\mathfrak{n})(-\log\underset{\bar{}}{a})}.
		\end{eqnarray}
		Hence, $\dfrac{c\xi}{\underset{\bar{}}{a}^i}\sum_{k=i}^{\infty} \dfrac{\underset{\bar{}}{a}^{k}}{k+\mathfrak{n}}\to 0$ as $i\to\infty$ such that $a_i\to 0$ as $i\to\infty$ and moreover $a_i >0$ for all $i\in\mathbb{N}$, since otherwise $a_i\to-\infty$ as $i\to\infty$ by Lemma \ref{lem:small_initial_value}. Moreover, by the lower bound from \eqref{eq:lower_bound_extended_ai} and the upper bound of order $\mathcal{O}(i^{-1})$ it follows that $a_i = \mathcal{O}(i^{-1})$ as $i\to\infty$. Finally by Lemma \ref{prop:behavior_above_below_threshhold}, every solution starting from a smaller $a_1$ tends to $-\infty$, such that $(a_i)_{i\geq 0}$ with initial value $(0,\hat{a})$ is the minimal positive solution to \eqref{eq:main_recurrence_relation}.
	\end{proof}
\section{Issues with simulating the minimal positive solution}
	This section is dedicated to the proof of Theorem \ref{thm:exclusion_rational_of_both} and to the stability analysis of said recurrence relation with respect to perturbations of the value $\hat{a}$ and its implication on simulation results. 
	To make our point, we recall a preliminary lemma and an important theorem on the nature of algebraic and transcendent numbers. They can be found in \cite{Ono1990} together with the respective proofs.
	\begin{lemma}[Properties Algebraic \& Transcendent Numbers, \cite{Ono1990}]\label{lem:structure_algebraic_numbers}
		The algebraic and transcendental numbers satisfy the 
		\begin{itemize}
			\item[a)] Any $p\in\mathbb{Q}$ is algebraic.
			\item[b)] The sum of a transcendental number and an algebraic number is transcendental.
			\item[c)] The product of a transcendental number by a nonzero algebraic number is transcendental.
		\end{itemize}
	\end{lemma}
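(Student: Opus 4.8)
The statement is classical and, as the text notes, is proved in \cite{Ono1990}; here is the route I would take. The three parts are handled separately, with (b) and (c) both reduced to the closure of the set of algebraic numbers under the field operations.

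Part (a) is immediate: writing $p = r/s$ with $r \in \mathbb{Z}$ and $s \in \mathbb{Z}\setminus\{0\}$, the number $p$ is a root of the nonzero rational polynomial $z \mapsto s\,z - r$, so $p$ is algebraic.

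For (b) and (c) the single structural fact I would establish first is that the set of algebraic numbers is a field, closed under addition, subtraction, multiplication, and inversion of nonzero elements. The cleanest argument is via finite field extensions: if $\alpha$ and $\beta$ are algebraic, then $\mathbb{Q}(\alpha,\beta)$ is a finite-dimensional $\mathbb{Q}$-vector space, and for any $\gamma$ in it the powers $1,\gamma,\gamma^2,\dots$ cannot all be $\mathbb{Q}$-linearly independent; a nontrivial linear relation among them is exactly a nonzero rational polynomial vanishing at $\gamma$, so $\gamma$ is algebraic. Taking $\gamma \in \{\alpha-\beta,\ \alpha\beta,\ \alpha\beta^{-1}\}$ yields all the closure properties needed. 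Granting this, (b) follows by contradiction: if $t$ is transcendental, $\alpha$ is algebraic, and $t+\alpha$ were algebraic, then $t = (t+\alpha)-\alpha$ would be a difference of two algebraic numbers, hence algebraic, contradicting the transcendence of $t$. Part (c) is analogous: if $\alpha \neq 0$ is algebraic and $t\alpha$ were algebraic, then, since $\alpha^{-1}$ is again algebraic, $t = (t\alpha)\cdot\alpha^{-1}$ would be a product of two algebraic numbers and hence algebraic, a contradiction.

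The genuinely nontrivial ingredient is the closure of the algebraic numbers under sum and product; once that is in hand, parts (b) and (c) are each a one-line contradiction. Since this closure is standard, I would simply cite it from \cite{Ono1990}, or, if a self-contained treatment is preferred, insert the finite-extension argument sketched above.
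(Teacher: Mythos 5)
Your proposal is correct, and it matches the paper in substance: the paper does not prove this lemma at all but simply cites \cite{Ono1990}, which is exactly the fallback you offer, while your sketch (part (a) via the linear polynomial $sz-r$, parts (b) and (c) by one-line contradictions resting on the closure of the algebraic numbers under field operations, itself obtained from the finite-dimensionality of $\mathbb{Q}(\alpha,\beta)$ over $\mathbb{Q}$) is the standard argument one finds in that reference. Nothing further is needed.
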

	Additionally, we employ a theorem on the relation of algebraic and transcendental numbers via the logarithm. It can be found including the proof in \cite{Kuhne2015}.
	\begin{theorem}[Logarithm of Algebraic Numbers, \cite{Kuhne2015}]\label{thm:logarithm_algebraic_transcendent}
		Let $z\neq 1$ be an algebraic complex number. Then $\log(z)$ is transcendental.
	\end{theorem}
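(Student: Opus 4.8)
The plan is to reduce this statement to the Hermite--Lindemann theorem, the special case of the Lindemann--Weierstrass theorem asserting that $e^{\alpha}$ is transcendental for every nonzero algebraic number $\alpha\in\mathbb{C}$. This transcendence result is the genuinely deep ingredient, and I would invoke it as a black box, since its proof (via an auxiliary polynomial, Siegel's lemma, and an arithmetic size estimate forcing a nonzero rational integer to lie strictly between $0$ and $1$) is precisely the content cited from \cite{Kuhne2015}. Everything beyond this reduces to a short contradiction argument, where it is understood throughout that $z\neq 0$ so that $\log(z)$ is defined.

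First I would argue by contradiction: suppose $z\neq 1$ is algebraic while $\alpha:=\log(z)$ is also algebraic, so that $e^{\alpha}=z$ by definition of the logarithm. I would then split into two cases according to whether $\alpha$ vanishes. If $\alpha=0$, then $z=e^{0}=1$, contradicting the hypothesis $z\neq 1$; this is exactly the role of the assumption $z\neq 1$, namely to exclude the one logarithm value, $0$, that happens to be algebraic. If instead $\alpha\neq 0$, then $\alpha$ is a nonzero algebraic number, so Hermite--Lindemann forces $e^{\alpha}$ to be transcendental; but $e^{\alpha}=z$ is algebraic by assumption, a contradiction. In either case the assumption that $\log(z)$ is algebraic is untenable, hence $\log(z)$ is transcendental.

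The one point I would take care to address is the multivaluedness of the complex logarithm: for any chosen branch one still has $e^{\alpha}=z$, and the dichotomy $\alpha=0$ versus $\alpha\neq 0$ exhausts all possibilities, so the conclusion is branch-independent. For the excluded value $z=1$ the non-principal branches give $\alpha=2\pi i k$ with $k\neq 0$, which is itself transcendental by Lemma \ref{lem:structure_algebraic_numbers}, since $\pi$ is transcendental and $2ik$ is a nonzero algebraic number; only the principal value $\log(1)=0$ is algebraic, which is why $z=1$ must be removed. The main obstacle, therefore, does not lie in this deduction at all but is entirely encapsulated in the underlying Hermite--Lindemann theorem; once that is granted, the argument is a two-line case analysis, and the real work is recognizing that the stated result is an immediate corollary of a standard transcendence theorem together with correct bookkeeping of the degenerate branch value at $z=1$.
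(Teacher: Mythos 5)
Your proof is correct: the dichotomy $\log z=0$ (forcing $z=1$, excluded) versus $\log z\neq 0$ algebraic (contradicting Hermite--Lindemann applied to $e^{\log z}=z$) is exactly the standard derivation of this statement, and your bookkeeping of the non-principal branches at $z=1$ is a careful bonus. Note that the paper supplies no proof of its own here --- it imports the theorem wholesale from \cite{Kuhne2015} --- so there is no internal argument to diverge from; your reduction, with Hermite--Lindemann as the black-boxed deep ingredient, is consistent with the citation-level treatment and is what the cited source amounts to.
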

	The combination of both, together with the explicit form of solutions, which we obtained previously, leads to the following proof of Theorem \ref{thm:exclusion_rational_of_both}.
	\begin{proof}
		Assume $\hat{a}\in\mathbb{Q}$ and $\gamma,\lambda,\mu\in\mathbb{Q}$. Thus, $\underset{\bar{}}{a}$ is algebraic and, therefore, also $1-\underset{\bar{}}{a}$ is algebraic by Lemma \ref{lem:structure_algebraic_numbers}. We have
		\begin{eqnarray*}
			\sum_{i=1}^{\infty} \dfrac{(\underset{\bar{}}{a})^i}{i+ \mathfrak{n} } &=&  \dfrac{1}{\underset{\bar{}}{a}}\sum_{i=1}^{\infty} \dfrac{(\underset{\bar{}}{a})^{i+1}}{i+ \mathfrak{n} } = \dfrac{1}{\underset{\bar{}}{a}}\left(\sum_{i=1}^{\infty} \dfrac{(\underset{\bar{}}{a})^{i}}{i-1+ \mathfrak{n} }-\dfrac{\underset{\bar{}}{a}}{ \mathfrak{n} }\right)\\
			&=& \dfrac{1}{\underset{\bar{}}{a}}\left(\dfrac{1}{\underset{\bar{}}{a}}\left(\sum_{i=1}^{\infty} \dfrac{(\underset{\bar{}}{a})^{i}}{i-2+ \mathfrak{n} }-\dfrac{\underset{\bar{}}{a}}{ \mathfrak{n} -1}\right)-\dfrac{\underset{\bar{}}{a}}{ \mathfrak{n} }\right)\\
			&=& \dfrac{1}{\underset{\bar{}}{a}}\left(\dfrac{1}{\underset{\bar{}}{a}}\left(\hdots\left(\sum_{i=1}^{\infty} \dfrac{(\underset{\bar{}}{a})^{i}}{i+1}-\dfrac{\underset{\bar{}}{a}}{2}\right)\hdots\right)-\dfrac{\underset{\bar{}}{a}}{ \mathfrak{n} }\right).
		\end{eqnarray*}
		Consider the term
		\begin{eqnarray*}
			\sum_{i=1}^{\infty} \dfrac{(\underset{\bar{}}{a})^{i}}{i+1} &=& \dfrac{1}{\underset{\bar{}}{a}}\sum_{i=1}^{\infty} \dfrac{(\underset{\bar{}}{a})^{i+1}}{i+1}=\dfrac{1}{\underset{\bar{}}{a}}\int_0^{\underset{\bar{}}{a}} \dfrac{1}{1-x}-1 dx\\
			&=& \dfrac{1}{\underset{\bar{}}{a}}(-\ln(1-\underset{\bar{}}{a}))-1.
		\end{eqnarray*}
		By Theorem \ref{thm:logarithm_algebraic_transcendent}, since $1-\underset{\bar{}}{a}\neq 1$, we obtain that $\sum_{i=1}^{\infty} \dfrac{(\underset{\bar{}}{a})^{i}}{i+1}$ is transcendent. Due to $\underset{\bar{}}{a}$ being algebraic, the term  $\sum_{i=1}^{\infty} \dfrac{(\underset{\bar{}}{a})^{i}}{i+\mathfrak{n}}$ is transcendent by the previous reasoning and lemma \ref{lem:structure_algebraic_numbers}. Thus,
		\begin{equation*}
			\lambda\,\hat{a} = \sum_{i=1}^{\infty} \dfrac{(\underset{\bar{}}{a})^{i}}{i+\mathfrak{n}}
		\end{equation*}
		is transcendent, but by assumption $\lambda \hat{a}\in\mathbb{Q}$ and, therefore, algebraic, such that we arrive at a contradiction.
	\end{proof} 
	Figure \ref{fig:divergence_from_initial_value} shows the simulation of a few trajectories based on the recurrence relation \eqref{eq:main_recurrence_relation} with varying values of $a>0$. Therein, we choose $a$ close to $\hat{a}$, for which the sequence stays bounded in view of Theorem \ref{thm:minimal_solution_result}. We show in Theorem \ref{thm:exclusion_rational_of_both} that a simulation with $a_1=\hat{a}$ is not feasible. 
	\begin{figure}[ht]
		\centering
\begin{tikzpicture}

\definecolor{color0}{rgb}{0.12156862745098,0.466666666666667,0.705882352941177}
\definecolor{color1}{rgb}{1,0.498039215686275,0.0549019607843137}
\definecolor{color2}{rgb}{0.172549019607843,0.627450980392157,0.172549019607843}
\definecolor{color3}{rgb}{0.83921568627451,0.152941176470588,0.156862745098039}
\definecolor{color4}{rgb}{0.580392156862745,0.403921568627451,0.741176470588235}
\definecolor{color5}{rgb}{0.549019607843137,0.337254901960784,0.294117647058824}
\definecolor{color6}{rgb}{0.890196078431372,0.466666666666667,0.76078431372549}
\definecolor{color7}{rgb}{0.737254901960784,0.741176470588235,0.133333333333333}
\definecolor{color8}{rgb}{0.0901960784313725,0.745098039215686,0.811764705882353}
\definecolor{color9}{rgb}{0.96078431372549,0.870588235294118,0.701960784313725}

\begin{axis}[
scale only axis,
scale = 1,
legend cell align={left},
legend entries={{a = 0.914789},{a = 0.917011},{a = 0.919233},{a = 0.921456},{a = 0.923678},{a = 0.925900},{a = 0.928122},{a = 0.930344},{a = 0.932567},{a = 0.934789}},
legend style={at={(0.03,0.97)}, anchor=north west, draw=white!80.0!black},
tick align=outside,
tick pos=left,
x grid style={white!69.019607843137251!black},
xlabel={Iteration Steps},
xmin=-0.45, xmax=9.45,
y grid style={white!69.019607843137251!black},
ylabel={Value of Sequence},
ymin=-1793.14661371236, ymax=1794.09254940317,
/pgf/number format/.cd,
1000 sep={}
]
\addlegendimage{no markers, color0}
\addlegendimage{no markers, color1}
\addlegendimage{no markers, color2}
\addlegendimage{no markers, color3}
\addlegendimage{no markers, color4}
\addlegendimage{no markers, color5}
\addlegendimage{no markers, color6}
\addlegendimage{no markers, white!49.803921568627452!black}
\addlegendimage{no markers, color7}
\addlegendimage{no markers, color8}
\addplot [semithick, color0]
table [row sep=\\]{%
0	0 \\
1	0.914788841098104 \\
2	0.874695335717945 \\
3	0.617582382500826 \\
4	-0.183938241926967 \\
5	-3.43888756458677 \\
6	-17.7085198519429 \\
7	-81.2140405983673 \\
8	-364.62614150799 \\
9	-1630.0902881162 \\
};
\addplot [semithick, color1]
table [row sep=\\]{%
0	0 \\
1	0.917011063320326 \\
2	0.884917557940167 \\
3	0.663271271389717 \\
4	0.0200973136285952 \\
5	-2.52773734236451 \\
6	-13.6396501630539 \\
7	-63.0439301628111 \\
8	-283.484955317766 \\
9	-1267.7428979025 \\
};
\addplot [semithick, color2]
table [row sep=\\]{%
0	0 \\
1	0.919233285542548 \\
2	0.895139780162389 \\
3	0.708960160278604 \\
4	0.224132869184144 \\
5	-1.61658712014232 \\
6	-9.57078047416517 \\
7	-44.8738197272562 \\
8	-202.343769127546 \\
9	-905.395507688819 \\
};
\addplot [semithick, color3]
table [row sep=\\]{%
0	0 \\
1	0.921455507764771 \\
2	0.905362002384612 \\
3	0.754649049167495 \\
4	0.428168424739708 \\
5	-0.705436897920064 \\
6	-5.50191078527612 \\
7	-26.7037092916999 \\
8	-121.202582937321 \\
9	-543.048117475117 \\
};
\addplot [semithick, color4]
table [row sep=\\]{%
0	0 \\
1	0.923677729986993 \\
2	0.915584224606834 \\
3	0.800337938056381 \\
4	0.632203980295252 \\
5	0.20571332430211 \\
6	-1.43304109638744 \\
7	-8.53359885614532 \\
8	-40.0613967471027 \\
9	-180.700727261447 \\
};
\addplot [semithick, color5]
table [row sep=\\]{%
0	0 \\
1	0.925899952209215 \\
2	0.925806446829055 \\
3	0.846026826945269 \\
4	0.836239535850802 \\
5	1.1168635465243 \\
6	2.63582859250132 \\
7	9.63651157940967 \\
8	41.079789443117 \\
9	181.646662952231 \\
};
\addplot [semithick, color6]
table [row sep=\\]{%
0	0 \\
1	0.928122174431437 \\
2	0.936028669051278 \\
3	0.891715715834159 \\
4	1.04027509140636 \\
5	2.02801376874656 \\
6	6.70469828139035 \\
7	27.8066220149659 \\
8	122.220975633342 \\
9	543.994053165933 \\
};
\addplot [semithick, white!49.803921568627452!black]
table [row sep=\\]{%
0	0 \\
1	0.93034439665366 \\
2	0.946250891273501 \\
3	0.93740460472305 \\
4	1.24431064696193 \\
5	2.93916399096881 \\
6	10.7735679702794 \\
7	45.976732450522 \\
8	203.362161823567 \\
9	906.341443379633 \\
};
\addplot [semithick, color7]
table [row sep=\\]{%
0	0 \\
1	0.932566618875882 \\
2	0.956473113495722 \\
3	0.983093493611936 \\
4	1.44834620251747 \\
5	3.85031421319099 \\
6	14.8424376591681 \\
7	64.1468428860767 \\
8	284.503348013785 \\
9	1268.68883359331 \\
};
\addplot [semithick, color8]
table [row sep=\\]{%
0	0 \\
1	0.934788841098104 \\
2	0.966695335717945 \\
3	1.02878238250083 \\
4	1.65238175807304 \\
5	4.76146443541325 \\
6	18.9113073480571 \\
7	82.316953321633 \\
8	365.644534204011 \\
9	1631.03622380701 \\
};
\node at (400,50)[
  text width=85pt,
  draw=black,
  line width=0.4pt,
  inner sep=3pt,
  anchor=base west,
  text=black,
  rotate=0.0,
  align=left
]{ $\mathfrak{n}=5.00$
$\mu=0.03$
$\gamma=0.15$
$\lambda=0.05$};
\end{axis}

\end{tikzpicture}
		\caption{Simulations of solutions $(a_i)_{i\in\mathbb{N}}$ to the recurrence relation \eqref{eq:main_recurrence_relation} as functions of $i$ using varying values of $a$. We observe that the trajectories exhibit one of two distinct behaviors, either a tendency to grow or decay increasingly fast, attaining values as large as $1500$ or as small as $-1500$ after just a few simulation steps. 
		\label{fig:divergence_from_initial_value}}
	\end{figure}
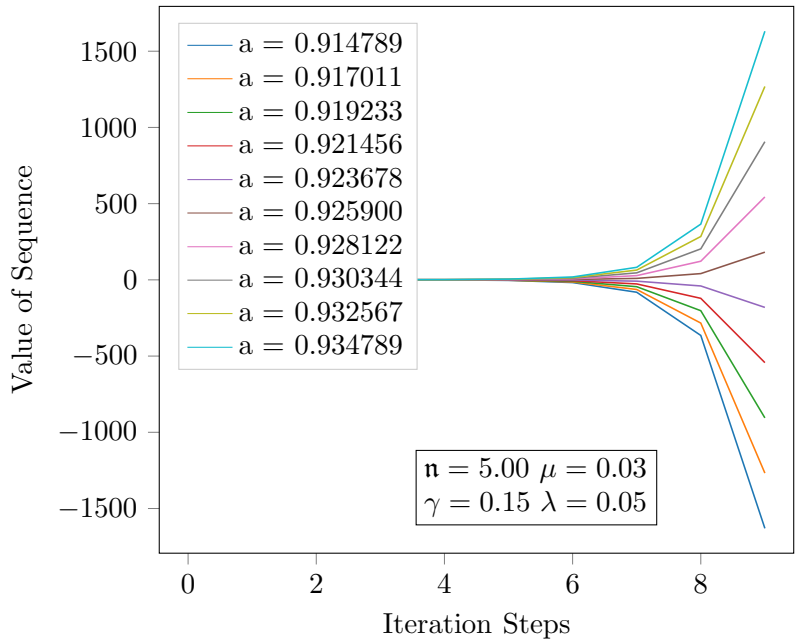
	The key problem is that a computer only has finite accuracy. From Theorem \ref{thm:minimal_solution_result} follows, that there is exactly one initial value $a$, such that the solution of the recurrence relation \eqref{eq:main_recurrence_relation} stays bounded. Thus, to simulate the solution $\hat{a}$ has to be interpreted exactly by the computer, i.e., it should at least be rational.\\
	In the case of $\gamma$, $\lambda$ or $\mu$ being irrational, we cannot initialize the simulation exactly. The same is true, if $\hat{a}$ is irrational. Thus, in order to attempt a simulation without an error we need at least $\hat{a}\in\mathbb{Q}$ and $\gamma,\lambda,\mu\in\mathbb{Q}$ to simulate the minimal positive solution $(a_i)_{i\in\mathbb{N}}$.\\
	Theorem \ref{thm:exclusion_rational_of_both} underlines the observation made in Figure \ref{fig:divergence_from_initial_value} that it is not possible to simulate the minimal positive solution correctly. This concludes our analysis of the recurrence relation \eqref{eq:main_recurrence_relation} and the minimal positive solution when fixing $a_0=0$ under Assumption \ref{amp:affine_rates}.
\section{Outlook}
	Having found explicit results under Assumption \ref{amp:affine_rates} nautral extensions come to mind by generalizing the form of the parameters. Indeed, they are twofold. Firstly, considering the most general form of the non-homogenity $\xi$ as a function of $i\in\mathbb{N}$, which could be called $\xi_i$, give identical qualitative results on the form of the minimal solution by changing the terms $\xi\Phi_{\mathfrak{n}}(z)$ to 
	\begin{equation*}
		\Psi_{\mathfrak{n};(\xi_i)_{i\in\mathbb{N}}}(z)=\sum_{i=0}^{\infty} \dfrac{\xi_i z^i}{i+\mathfrak{n}}.
	\end{equation*}
	This yields identical results as in Theorems \ref{thm:explicit_form_any_solution} and \ref{thm:minimal_solution_result} for the form of the resulting sequence $(a_i)_{i\in\mathbb{N}}$. Nonetheless, the quantitative result on the convergence speed cannot be proven in the same way and will depend on the behavior of $(\xi_i)_{i\in\mathbb{N}}$. In particular, the link to the natural logarithm used in Equation \eqref{eq:link_to_log} is no longer valid. Methods from Analytic Combinatorics as discussed, for example, in \cite{flajolet2009analytic} might shed light on the question of the link between the limit behaviors of $(a_i)_{i\in\mathbb{N}}$ and $(\xi_i)_{i\in\mathbb{N}}$ when keeping the remaining assumptions in Assumption \ref{amp:affine_rates}. \\
	On the other hand, using more general forms for $(\tilde{\lambda}_i)_{i\in\mathbb{N}}, (\tilde{\mu}_i)_{i\in\mathbb{N}}$ and $(\tilde{\gamma}_i)_{i\in\mathbb{N}}$ will render the calculations more difficult or even unfeasible, as soon as their quotients are not independent of $i$. The particular affine form, which we chose in Assumption \ref{amp:affine_rates}, is only a first step and a useful when looking for explicit forms of the solution.   
	
\medskip

\bibliographystyle{alpha}
\bibliography{bibliography}

\end{document}